\crefname{enumi}{}{}
\crefname{property}{property}{properties}
\crefname{LEM}{Lemma}{the Lemmas}
\renewcommand{\PrintDOI}[1]{\doi{#1}}
\newtheorem{THM}{Theorem}[section]
\newtheorem{LEM}[THM]{Lemma}
\newtheorem{COR}[THM]{Corollary}
\newtheorem{QUESTION}[THM]{Question}
\theoremstyle{definition}
\newcommand{\abs}[1]{\lvert#1\rvert}
\newcommand{\menge}[1]{\left\{#1\right\}}
\newcommand{\tn}[1]{\textnormal{#1}}
\renewcommand{\phi}{\varphi}
\newcommand{\R}{\mathbb{R}}
\newcommand{\join}{\lor}
\newcommand{\meet}{\land}
\newcommand{\sub}{\subseteq}
\newcommand{\sm}{\smallsetminus}
\newcommand{\cB}{\mathcal{B}}
\newcommand{\cJ}{\mathcal{J}}
\newcommand{\cO}{\mathcal{O}}
\DeclareMathOperator{\DM}{\textbf{DM}}
\newcommand{\ucl}[1]{\lfloor #1 \rfloor}
\newcommand{\dcl}[1]{\lceil #1 \rceil}
\title{The Structure of Submodular Separation~Systems}
\author{Christian Elbracht \and Jakob Kneip \and Maximilian Teegen}
\date{24th March 2021\\ (updated 28th Sept.\ 2021)}
\begin{document}
\maketitle
\begin{abstract}
We analyse various structural and order-theoretical aspects of abstract separation systems and partial lattices, as well as the relationship between the different submodularity conditions one can impose on them. 
\end{abstract}

\section{Introduction}
As part of their graph minor project \cite{GMX}, Robertson and Seymour introduced tangles as a tool  to capture indirectly `highly connected regions' of a graph. Subsequently this tool has been generalised to many other contexts \cites{ProfilesNew,AbstractTangles,TangleTreeAbstract,TangleTreeGraphsMatroids,ProfileDuality,FiniteSplinters,TanglesInMatroids,AbstractSepSys}. Part of this process of generalisation was to abstract from concrete separations of a graph to a set of more general objects, called an abstract separation system, of which the separations of a graph form but an instance. These abstract separation systems carry just enough structure to make the fundamental theorems of tangle theory work \cite{ProfilesNew,TangleTreeAbstract,ToTviaTTD}, and  they enable us to give unifying proofs of these main theorems of tangle theory for a variety of different settings. Moreover, since the definitions for abstract separation systems are just the bare minimum needed to make those  proofs work, these abstract separation systems become an interesting mathematical structure in their own right: any results about them immediately apply to a variety of contexts, including separations of graphs and matroids.

Formally, abstract separation systems are just a poset $\vS$ together with an order-reversing involution $ {}^\ast $. (The elements of $\vS$ are called \emph{separations} and the image of a separation under ${}^\ast$ is called its \emph{inverse}.)
A central property that is required of separation systems in almost every context is some form of \emph{submodularity}: a property needed to make the separation system `rich enough' to prove the desired theorems, for example the tree-of-tangles theorem \cite{ProfilesNew}.
Our aim in this paper is to study, and relate, various forms of submodularity -- always with an eye on its uses in the proofs of the central tangle theorems.

Originally, in \cite{ProfilesNew,TangleTreeAbstract,TanglesInMatroids,GMX}, it was not only required that $\vS$ was part of a \emph{universe of separations} -- a separation system $\vU \supseteq \vS$ which forms a lattice -- but also that there exist a submodular order function $f\colon \vU \to \R^+_0$ and $k \in \R^+_0$ such that \[
    \vS = \vS_k \coloneqq \{ \vs \in \vU \mid f(\vs) < k \}.
\]
Here, $f$ being \emph{submodular} means that $f(\vr) + f(\vs) \geq f(\vr \join \vs) + f(\vr \meet \vs)$ for all $\vr,\vs\in\vU$, and $f$ being an \emph{order function} means that it is invariant under taking inverses.

Diestel, Erde and Weißauer~\cite{AbstractTangles} showed that the theorems of tangle theory could also be deduced without relying on such an order function, demanding instead just one structural property of $\vS\subseteq\vU$ which in the case of the sets $\vS_k$ is imposed by the submodularity of $f$: 
that for all $\vr,\vs \in \vS$ at least one of $\vr\join\vs$ and $\vr\meet\vs$ is also in $\vS$.
Note that this structural property of $\vS$ is measured externally:
in the universe $\vU$, where the join $\vr\join\vs$ and the meet $\vr\meet\vs$ are taken.
To reflect this, we say that $\vS \subseteq \vU$ is \emph{submodular in $\vU$}.
Whenever a submodular order function $f$ on $\vU$ and a number $k$ exist such that $\vS = \vS_k$ for this order function, we say that the submodularity of $\vS$ in $\vU$ is \emph{order-induced} in $\vU$.

The authors of this paper have, in much of their own work, relied heavily on such structural submodularity of separation systems, rather than on the existence of a submodular order function. Indeed, separation systems which are submodular in some universe of separations form the most relevant class of separation systems nowadays, and the most general theorems of abstract tangle theory are formulated in their context \cites{AbstractTangles,FiniteSplinters,CanonicalToT,ToTviaTTD}.

The most natural structural notion of submodularity, however, is simply to call
a separation system $\vS$ \emph{submodular} if any two separations $\vr, \vs \in \vS$ have either a supremum $\vr \join \vs$ or an infimum $\vr \meet \vs$ in $\vS$.
Unlike in our earlier definition of submodularity for $\vS$ in some universe~$\vU$, the question now is whether such infima and suprema exist -- not whether they lie in~$\vS$.

Note that every separation system $\vS$ that is submodular in some universe $\vU$ of separations is also submodular in this sense, since every infimum or supremum of $\vr,\vs \in \vS$ in~$\vU$ which, by submodularity in $\vU$, also lies in $\vS$, is also the supremum or infimum of $\vr$ and $\vs$ in~$\vS$.
Submodularity of a separation system $\vS$, as defined locally in $\vS$ itself, is therefore a weakening of submodularity in some surrounding universe of separations.

One can then ask whether this weaker kind of submodularity still suffices as a basis for the theorems of tangle theory, which traditionally assume that the separation system $\vS$ whose tangles are studied is submodular in some universe~$\vU$.
Our first result, which we prove in \cref{sec:DM}, shows that it does:

\begin{restatable}{MAINTHM}{corsubmodular}
    \label{thm:submodularcor}
    Every submodular separation system is submodular in some universe of separations.
\end{restatable}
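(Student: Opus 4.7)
The plan is to take $\vU := \DM(\vS)$, the Dedekind-MacNeille completion of the poset underlying $\vS$, and show that it can be turned into a universe of separations in which $\vS$ sits as a submodular sub-separation-system. Recall that $\DM(\vS)$ is a complete lattice equipped with an order-embedding $\vS \hookrightarrow \DM(\vS)$ which preserves every join and meet that already exists in $\vS$; concretely, its elements can be realised as the cuts $(A,B)$ of $\vS$, where $B$ is the set of upper bounds of $A$ and $A$ is the set of lower bounds of $B$, ordered by inclusion of the first coordinate.

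First I would extend the involution ${}^\ast$ on $\vS$ to $\DM(\vS)$ by setting $(A,B)^\ast := (B^\ast, A^\ast)$, where $X^\ast := \menge{x^\ast : x \in X}$. A brief check, using only the fact that ${}^\ast$ is an order-reversing involution on $\vS$, shows that $(B^\ast, A^\ast)$ is again a cut, that this operation is involutive, and that it reverses the order on $\DM(\vS)$ while restricting to the original ${}^\ast$ on~$\vS$. With this, $\vU$ is a universe of separations containing $\vS$ as a sub-separation-system.

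Next I would verify that $\vS$ is submodular in $\vU$. Let $\vr, \vs \in \vS$. By the submodularity of $\vS$, either $\vr \join \vs$ or $\vr \meet \vs$ exists in $\vS$ itself. Since the Dedekind-MacNeille embedding preserves all joins and meets that already exist in $\vS$, this same element is also the corresponding extremum of $\vr$ and $\vs$ when computed in $\vU$. In particular, at least one of the join and the meet of $\vr$ and $\vs$ in $\vU$ lies in $\vS$, which is precisely submodularity of $\vS$ in $\vU$.

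I expect the main obstacle to be the extension of the involution: one must carefully verify that $(A,B) \mapsto (B^\ast, A^\ast)$ indeed sends cuts to cuts, behaves involutively, and reverses the order on all of $\DM(\vS)$. This is essentially a bookkeeping argument that makes no use of submodularity, but it is the only point at which the proof interacts with the specific structure of separation systems as opposed to bare posets; everything else is an immediate consequence of the defining universal property of the Dedekind-MacNeille completion.
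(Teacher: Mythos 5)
Your proposal is correct and follows essentially the same route as the paper: both pass to the Dedekind--MacNeille completion and extend the involution via the cut structure (your $(A,B)\mapsto(B^\ast,A^\ast)$ is exactly the paper's $X^\circledast := X^{u\ast}$ written in cut notation), then invoke preservation of existing joins and meets. You correctly identify that the only real work is verifying this map is a well-defined order-reversing involution extending the original one, which is precisely what the paper's proof of its Theorem~3.1 carries out.
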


\Cref{thm:submodularcor} allows us to apply the main
theorems of tangle theory to separation systems which are known to be submodular only in the weaker local sense, without the need to re-prove them under this weaker assumption.

In \cref{sec:counterex_orderfun,sec:extend} we turn our attention to the question of when the submodularity of a separation system in a universe $\vU$ is always induced  by a submodular order function on $\vU$.
In \cref{sec:counterex_orderfun} we prove that it need not be:
\begin{restatable}{MAINTHM}{counterexorderfun}\label{thm:example_bip}
  There exists a separation system $\vS$ which is submodular in a universe $\vU$ of set bipartitions whose submodularity in $\vU$ is not induced by a submodular order function on $\vU$.
\end{restatable}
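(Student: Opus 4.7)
My plan is to prove existence by an explicit construction. Take $\vU$ to be the universe of all bipartitions of a small finite set $X$ -- identifying a bipartition $(A,X\setminus A)$ with $A\sub X$, so that $\vU$ is the Boolean lattice $2^X$ equipped with the involution $A\mapsto X\setminus A$ -- and specify an inversion-closed subset $\vS\sub\vU$ by listing its elements. Verification that $\vS$ is submodular in $\vU$ is then a finite case-check: for each unordered pair $\{A,B\}\sub\vS$ one confirms that $A\cup B\in\vS$ or $A\cap B\in\vS$, a task that becomes manageable when $\vS$ is designed with enough symmetry that only a few orbit representatives need to be inspected.

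To rule out an inducing order function, I would argue by contradiction: assume $f\colon\vU\to\R^+_0$ is submodular with $f(A)=f(X\setminus A)$ and that some $k\in\R^+_0$ satisfies $\vS=\{A:f(A)<k\}$. The goal is to exhibit a finite family of ordered pairs $(A_i,B_i)_{i=1}^n$ and nonnegative weights $\lambda_i$ such that summing
\[
\lambda_i\bigl(f(A_i)+f(B_i)\bigr)\;\geq\;\lambda_i\bigl(f(A_i\cup B_i)+f(A_i\cap B_i)\bigr),
\]
after possibly replacing some $B_i$ by $X\setminus B_i$ (allowed by inversion invariance), cancels all intermediate terms and leaves an inequality of the form
\[
\sum_j \mu_j f(C_j) \;\geq\; \sum_\ell \nu_\ell f(D_\ell)
\]
in which $\sum_j\mu_j=\sum_\ell\nu_\ell=:N$, every $C_j$ lies in $\vS$, and every $D_\ell$ lies outside $\vS$. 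The strict bounds $f(C_j)<k$ and $f(D_\ell)\geq k$ then yield the contradiction $Nk>\sum_j\mu_j f(C_j)\geq\sum_\ell\nu_\ell f(D_\ell)\geq Nk$.

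The main obstacle is designing $\vS$ together with the accompanying family of pairs so that such a cancellation actually succeeds: one needs the local submodularity of $\vS$ in $\vU$ (which is a pointwise condition on pairs) to be compatible with every pair inequality without being strong enough to satisfy some global linear combination of them. This is essentially the task of producing an explicit infeasibility certificate for the linear programme whose variables are $\{f(A)\}_{A\in\vU}$ together with $k$, and whose constraints encode submodularity, inversion invariance, and the two threshold inequalities; Farkas-type LP duality guarantees that whenever no admissible $f,k$ exist, a witness of precisely the above form does. I would therefore search for a minimal example by fixing $|X|$ to a small value (likely at most six or seven), enumerating symmetric candidates $\vS$ that satisfy submodularity in $\vU$, solving the resulting feasibility LP, and then translating the dual certificate from any infeasible instance into the explicit finite combination of submodular inequalities displayed in the final proof. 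The critical feature of a successful example is that it exposes a global inequality forced on level sets by any symmetric submodular $f$ which local submodularity in $\vU$ is simply too weak to impose.
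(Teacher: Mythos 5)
Your strategy is sound and is, in essence, the paper's: exhibit an explicit inversion-closed $\vS$ inside a bipartition universe on a small ground set, and refute any inducing pair $(f,k)$ by a nonnegative combination of submodular inequalities whose intermediate terms cancel, leaving members of $\vS$ on the large side and non-members on the small side with equal total weight. The paper packages exactly such certificates combinatorially, as directed cycles in a \emph{dependency digraph} on $\vU$: it places an edge from $a$ to $b$ whenever, say, $b=a\join c$ for some $c\in\vS$ with $a\meet c\notin\vS$, since then $f(a\meet c)\ge k>f(c)$ and submodularity force $f(b)<f(a)$; a directed cycle telescopes to precisely your inequality with all weights equal to $1$. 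Your appeal to Farkas duality is used only as a search heuristic, so the slight overstatement that an infeasibility witness must have \emph{precisely} your form (a priori it could also charge the constraints $f\ge 0$) does no harm.

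The genuine gap is that you never produce the example, and for an existence theorem the example is the entire content: it is not at all obvious beforehand that structural submodularity in $\vU$ can fail to be order-induced, and ``enumerate symmetric candidates and solve the LP'' is a research plan, not a proof. To close the gap you must write down a concrete $\vS$, verify its submodularity in $\vU$, and verify a concrete certificate. The paper's instance lives in $\cB(V)$ for $V=\{a,b,c,d,e,f\}$; $S$ consists of $\{\emptyset,V\}$, the singletons $\{b\},\{d\},\{f\}$ (each against its complement), the pairs $\{a,b\},\{c,d\},\{e,f\}$, and the triples $\{a,b,c\},\{a,b,f\},\{a,e,f\}$. The dependency digraph then contains the directed cycle $(\{a,b,c,d\},\{e,f\})\to(\{a,b\},\{c,d,e,f\})\to(\{a,b,e,f\},\{c,d\})\to(\{e,f\},\{a,b,c,d\})\to(\{c,d,e,f\},\{a,b\})\to(\{c,d\},\{a,b,e,f\})\to(\{a,b,c,d\},\{e,f\})$; for instance the first arc holds because $(\{a,b,c,d\},\{e,f\})\meet(\{a,b,f\},\{c,d,e\})=(\{a,b\},\{c,d,e,f\})$ while the corresponding join $(\{a,b,c,d,f\},\{e\})$ is not in $\vS$. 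Until you have exhibited and verified such an instance, your argument shows only that a certificate of the stated form \emph{would} suffice, not that one exists.
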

More precisely, we present a necessary condition for the submodularity of a separation system in a universe~$\vU$  to be order-induced in $\vU$, and use this to give concrete examples of systems which are submodular in some universe $\vU$ of separations but whose submodularity is not order-induced in this $\vU$.

In \cref{sec:extend} we consider another aspect of order-induced submodularity.
Whether the submodularity in a universe $\vU$ of a separation system is order-induced or not depends, a priori, on the choice of $\vU$.
As a simple example, consider the case that a separation system $\vS$ is submodular in a universe $\vU$ of separations, and that $\vU$ is a subuniverse of some larger universe $\vU'$ of separations. Then $\vS$ is submodular also in $\vU'$.
If the submodularity of $\vS$ in $\vU$ is witnessed by some submodular order function on $\vU$, we may ask whether we can extend this function to $\vU'$ to witness that $\vS$ is submodular also in~$\vU'$.
We show that this can be done in some cases.
The general question of whether it is always possible to extend such a witnessing submodular order function to a larger universe remains open.

Finally, in \cref{sec:subdec}, we present two decomposition theorems for separation systems that are submodular in distributive universes.
Our first decomposition theorem allows us to write every such separation system $\vS$ as a (not necessarily disjoint) union of three smaller ones, each of which is not only again submodular in the same universe, but is also closed under taking existing corners in $\vS$.
Thus, we cover $\vS$ by smaller, simpler, `spanned' subsystems.
To prove this, we introduce a variation of Birkhoff's representation theorem for universes of separations instead of lattices.
Moreover, in our decomposition theorem, the subsystems can be chosen disjoint, unless the separation system to be decomposed is one of set bipartitions.

Separation systems that are submodular in the (natural) universe $\vU$ of bipartitions of a set $V$ cannot be decomposed disjointly into submodular subsystems.
Indeed every non-empty subsystem would have to contain the separations $(V, \emptyset)$ and $(\emptyset, V)$, since these form opposite corners of every pair of inverse separations.
By submodularity in $\vU$ one of these -- and hence also the other as its inverse -- would have to lie in this subsystem.

Separation systems of set bipartitions are, however, very concrete and better understood than the more general abstract separation systems.
We may view these bipartition systems as the `elementary parts' which make up the separation systems that are submodular in distributive universes.
Applying our decomposition theorem repeatedly, for as long as disjoint decompositions are possible, we can thus break down every separation system that is submodular in a distributive universe into those elementary subsystems.
\begin{restatable}{MAINTHM}{decomposeinbipartitions}
    Every separation system $\vS$ which is submodular in some distributive universe~$\vU$ of separations is a disjoint union of corner-closed subsystems $\vS_1, \dots, \vS_n$ of $\vS$ (which are thus also submodular in $\vU$)
  each of which can be corner-faithfully embedded into a universe of bipartitions.
  \ifnum\value{section}>1

  Specifically, these subsystems are the equivalence classes of the relation $\sim$ on $\vS$ where $\vs \sim \vt$ if and only if $\vs \meet \sv = \vt \meet \tv$ in~$\vU$.
  \fi
\end{restatable}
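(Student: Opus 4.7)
The plan is to verify the description of the $\vS_i$ given in the theorem. Let $\vS_1,\dots,\vS_n$ be the equivalence classes of $\sim$ on $\vS$. Since $\sim$ is defined by equality of the $\vU$-element $\vs \meet \sv$, it is trivially an equivalence relation, so the $\vS_i$ are automatically pairwise disjoint and cover $\vS$.

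The first real step is corner-closedness of each $\vS_i$. Fix an index $i$ and write $e$ for the common value of $\vs \meet \sv$ as $\vs$ ranges over $\vS_i$. Suppose $\vs,\vt \in \vS_i$ and $\vs \meet \vt \in \vS$. Using distributivity of $\vU$ I rewrite
\[
(\vs \meet \vt) \meet (\vs \meet \vt)^{*} = (\vs \meet \vt) \meet (\sv \join \tv) = (\vs \meet \vt \meet \sv) \join (\vs \meet \vt \meet \tv).
\]
Since $e = \vs \meet \sv = \vt \meet \tv$ lies below both $\vs$ and $\vt$, both summands collapse to $e$, so $(\vs \meet \vt) \meet (\vs \meet \vt)^{*} = e$ and therefore $\vs \meet \vt \in \vS_i$. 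A dual distributive calculation, replacing $\meet$ by $\join$ and $\sv \join \tv$ by $\sv \meet \tv$, handles the case where $\vs \join \vt$ lies in $\vS$. Once each $\vS_i$ is corner-closed in $\vS$, the submodularity of $\vS$ in $\vU$ immediately transfers to submodularity of each $\vS_i$ in $\vU$.

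The substantive step is the bipartition embedding. For this I invoke the Birkhoff-style representation theorem for universes of separations developed earlier in this section. It realises $\vU$ (or a subuniverse comfortably containing $\vS_i$) as a lattice of downsets of a poset $P$ carrying an order-reversing involution $\sigma$, with the separation involution on downsets becoming $A \mapsto P \sm \sigma(A)$. The constancy of $\vs \meet \sv = e$ on $\vS_i$ forces every downset $A_{\vs}$ representing some $\vs \in \vS_i$ to satisfy $A_{\vs} \cap \sigma(A_{\vs}) = D_e$, where $D_e$ is the downset representing $e$. Restricting attention to the ground set $P \sm D_e$, each $\vs \in \vS_i$ becomes an honest set bipartition of $P \sm D_e$ and the separation involution acts simply as the swap of sides. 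Corner-faithfulness follows because the Birkhoff representation is itself a lattice embedding of $\vU$, so any corner of $\vs,\vt \in \vS_i$ that exists in $\vS$ (hence in $\vU$) is sent to the corresponding meet or join of bipartitions.

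I expect the main obstacle to be this third step: setting up the Birkhoff representation so that the involution has the expected form $A \mapsto P \sm \sigma(A)$, verifying that after stripping off the common intersection $D_e$ one indeed lands in a genuine universe of bipartitions, and ensuring that corner-faithfulness survives the restriction to corners that exist only in $\vS$, not merely in $\vU$. The first two steps are essentially lattice-theoretic book-keeping that becomes routine once distributivity is in hand; what really does the work is the constancy of $\vs \meet \sv$ across each class, which is precisely the condition that prevents the embedding from landing in a more general distributive universe and instead forces a bipartition target.
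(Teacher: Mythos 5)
Your proposal follows essentially the same route as the paper, which obtains this statement from the machinery of \cref{thm:decompose_toostrong} together with the Birkhoff representation of \cref{thm:birkhoff_uni}: there too the key point is that under $\vU\cong\cO(P)$ the element $\vs\meet\sv$ becomes $\menge{p\in A_{\vs}\mid \sigma(p)\notin A_{\vs}}$, and the $\sim$-classes are exactly what remains once this common part is stripped off. Your direct distributivity computation for corner-closedness is correct and is a nice self-contained substitute for the paper's indirect argument via its recursive $\vS_p,\vS_{p'},\vS_{p,p'}$ decomposition.

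One concrete correction to your third step. Since the involution on $\cO(P)$ is $A\mapsto P\sm\sigma(A)$, the constancy of $\vs\meet\sv$ on a class gives $A_{\vs}\sm\sigma(A_{\vs})=D_e$, not $A_{\vs}\cap\sigma(A_{\vs})=D_e$; and the ground set you must restrict to is $Q=P\sm\bigl(D_e\cup\sigma(D_e)\bigr)$ rather than $P\sm D_e$. Indeed, every $p\in\sigma(D_e)$ lies in no $A_{\vs}$ of the class and also in no $A_{\vs}^{\ast}=P\sm\sigma(A_{\vs})$, so on the ground set $P\sm D_e$ such a $p$ would sit on neither side and you would not obtain genuine bipartitions. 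On $Q$ one checks that $p\in A_{\vs}\Leftrightarrow\sigma(p)\in A_{\vs}$ for every member of the class, whence $A_{\vs}^{\ast}\cap Q=Q\sm(A_{\vs}\cap Q)$ and $A_{\vs}\mapsto A_{\vs}\cap Q$ is injective on the class and preserves unions and intersections; this is the corner-faithful embedding into $\cB(Q)$. With that adjustment your argument goes through and matches the paper's.
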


Careful analysis of the proof of our decomposition theorem allows us to explicitly specify the subsystems.

The research in this paper was inspired, in part, by our search for a solution to the unravelling problem, which is concerned with another property in which the different kinds of submodularity differ. We encourage the reader to take a look at this paper's sibling,~\cite{Unravel}, to learn more about this problem.

\section{Preliminaries}
\label{sec:prelim}

In this paper we will use terminology from lattice theory as well as the theory of abstract separation systems.
We also introduce some definitions specific to this paper, most of which are generalisations of definitions made for separation systems and universes of separations to posets and lattices, respectively.
The essential terminology, which we will use throughout, is presented in this section.

All structures (posets, lattices, universes of separations etc.) in this paper are assumed to be finite unless explicitly stated otherwise.
\subsection{Lattice theory}
Let us begin with some terminology from lattice theory. We largely follow the notation of \cite{LatticeBook}.

A \emph{lattice} is a non-empty partially ordered set (or `poset') $L$ in which any two elements $a,b\in L$ have a supremum and an infimum, that is, there is a unique element $a\join b$ (their \emph{join} or \emph{supremum}) minimal such that $a\le a\join b$ and $b\le a\join b$ and a unique element $a\meet b$ (their \emph{meet} or \emph{infimum}) maximal such that $a\ge a\meet b$ and $b\ge a\meet b$.

Each lattice has a unique \emph{top} and a unique \emph{bottom} \emph{element}, that is an element $\top\in L$ with $a\le \top$ for every $a\in L$ and an element $\bot\in L$ with $\bot\le a$ for every $a\in L$. 

Two lattices are isomorphic, if they are isomorphic as partial orders. In particular, joins and meets are preserved under isomorphisms of lattices.
A \emph{sublattice} $L'$ of $L$ is a subset of $L$ which is closed under pairwise joins and meets in $L$.


A lattice is \emph{distributive} if it satisfies the distributive laws, that is for all $a,b,c\in L$ we have that $a\join (b\meet c)=(a\join b)\meet (a\join c)$ and $a\meet (b\join c)=(a\meet b)\join (a\meet c)$.

A typical example of a distributive lattice consists of the subsets of some set $V$ ordered by $\subseteq$.
Here the join of two sets is their union, the meet is their intersection.
We call this lattice the \emph{subset lattice of $V$}.

In fact, all finite distributive lattices can be represented as a set of subsets where $\join$ and $\meet$ coincide with union and intersection.
This is a fundamental result of lattice theory known as the \emph{Birkhoff representation theorem}, which we can state after the following additional definitions: a non-bottom element $x \in L$ is \emph{join-irreducible} if whenever $x = a \join b$ for some $a,b \in L$, then $x \in \{a,b\}$. The set of all join-irreducible elements of $L$ is denoted $\mathcal{J}(L)$ and forms a partially ordered set with the order inherited from $L$.
Given a partially ordered set $(P, \leq)$, the down-closed sets in $P$ form a distributive lattice with $\subseteq$ as the partial order, union as join and intersection as meet. This lattice is denoted as $\mathcal{O}(P)$.

\begin{restatable}[Birkhoff representation theorem; cf.\ \cite{LatticeBook}*{\S 5.12}]{THM}{birkhoff}
    \label{thm:birkhoff_book}
    Let $L$ be a finite distributive lattice. The map $\eta\colon L \to \cO(\cJ(L))$ defined by $
        \eta(a) = \menge{ x \in \cJ(L) \mid x \leq a } = \dcl{a}_{\cJ(L)}
    $ is an isomorphism of lattices.
\end{restatable}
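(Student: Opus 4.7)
The plan is to verify that $\eta$ is a well-defined bijection preserving $\join$ and $\meet$, relying on two key facts: (i) in any finite lattice, every element is the join of the join-irreducibles below it, and (ii) distributivity forces every join-irreducible below a finite join to lie below one of the summands.

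First I would establish fact (i) by induction on the height (or on $|\dcl{a}|$): either $a$ itself is join-irreducible or $a=b\join c$ with $b,c<a$, in which case the induction hypothesis writes both $b$ and $c$ as joins of join-irreducibles below them, and these together form a set of join-irreducibles below $a$ whose join is $a$. This immediately gives well-definedness of $\eta$ (the set $\eta(a)$ is a down-set of $\cJ(L)$ by transitivity) as well as the crucial reconstruction formula $a=\bigvee \eta(a)$.

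Next I would verify that $\eta$ preserves meets and joins. Meet-preservation $\eta(a\meet b)=\eta(a)\cap\eta(b)$ is a direct unfolding of the definitions. For joins, the inclusion $\eta(a)\cup\eta(b)\sub \eta(a\join b)$ is trivial, and the reverse inclusion is where distributivity does the work: for any join-irreducible $x\le a\join b$, distributivity yields
\[
x = x\meet(a\join b)=(x\meet a)\join(x\meet b),
\]
so by join-irreducibility $x=x\meet a$ or $x=x\meet b$, i.e.\ $x\in\eta(a)\cup\eta(b)$. This is the step I expect to be the main conceptual obstacle, since it is the only place where distributivity is genuinely used.

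Injectivity of $\eta$ then follows from the reconstruction formula: $\eta(a)=\eta(b)$ implies $a=\bigvee\eta(a)=\bigvee\eta(b)=b$. For surjectivity, given a down-set $D\sub\cJ(L)$, I would set $a\coloneqq\bigvee D$ and show $\eta(a)=D$. Clearly $D\sub\eta(a)$. Conversely, for $x\in\eta(a)$ with $x$ join-irreducible, iterating the distributive law (or invoking the already-proven fact that $\eta$ preserves finite joins) gives $x\le d$ for some $d\in D$, and since $D$ is down-closed in $\cJ(L)$ and $x\in\cJ(L)$, we conclude $x\in D$. Thus $\eta$ is a bijection preserving both lattice operations, hence a lattice isomorphism.
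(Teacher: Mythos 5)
Your proof is correct and is the standard textbook argument for Birkhoff's representation theorem; the paper itself gives no proof but simply cites \cite{LatticeBook}*{\S 5.12}, where essentially this same argument (reconstruction of $a$ as $\bigvee\dcl{a}_{\cJ(L)}$, plus distributivity to split a join-irreducible below a join) appears. The only cosmetic omission is the base case $a=\bot$ in your induction for fact (i), which is handled by the empty join, and the remark that a bijection preserving $\join$ and $\meet$ is automatically an order-isomorphism.
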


Given a lattice $L$, any subset $P\subseteq L$ together with the restrictions of $\join$ and $\meet$ (as \emph{partial functions}) is called a \emph{partial lattice}. \cite{GraetzerLatticeThry}

\subsection{Separation systems}
The foundations of abstract separation systems are summarised in \cite{AbstractSepSys}.
What follows are definitions which we adopt from there.

A \emph{separation system} is a partially ordered set $\vS$ with an order-reversing involution~$^\ast$.\footnote{In the context of order theory separation systems are known under several different names, such as \emph{involution posets} \cite{BLYTH19721}.}
For an element $\vs \in \vS $, a \emph{separation}, we write $\sv$ as shorthand for $\vs^*$ and we denote the set $\{\vs,\sv\}$ of a separation together with its inverse just as the \emph{unoriented separation} $s$. The set of unoriented separations of $\vS$ is denoted as $S$, and vice versa.
When there is no risk of confusion, we use terms defined for unoriented separations also for their orientations, and the other way around.
Notably, we also speak of `the separation system $S$.'


If a separation system $\vU$ is a lattice, we say that $U$ is a \emph{universe (of separations)}.
In this case DeMorgan's laws holds: \[
    (\vr \join \vs)^* = \rv \meet \sv \;\text{ and }\; (\vr \meet \vs)^* = \rv \join \sv \; \text{ for all $\vr,\vs\in\vU$.}
\]

Given two separations $\vr, \vs \in \vU$, the separations $\vr \join \vs$, $\rv \join \vs$, $\vr \join \sv$ and $\rv \join \sv$ are called the \emph{corners of $\vr$ and $\vs$ in $\vU$}.

An \emph{isomorphism} between two separation systems $\vS$ and $\vS'$ is a bijection that preserves $\le$ and~$^\ast$.
Note that, in the case of a universe, such an isomorphism automatically preserves joins and meets.

Note that, given a set $V$, the subset lattice on $V$ is an example of a universe of separations via the involution $A^\ast=V\sm A$ for $A\subseteq V$. We call this universe the \emph{bipartition universe $\cB(V)$} on $V$. If we consider $\cB(V)$, we also write $(A,V\sm A)$ instead of just $A$ for oriented separations and we denote the unoriented separation corresponding to $(A,V\sm A)$ and $(V\sm A,A)$ as $\{A,V\sm A\}$.

Another example of a universe of separations defined on a set $V$ is the set of \emph{set separations of $V$}, which is given by all the sets $\{A,B\}$ of subsets $A,B$ of $V$ such that $A\cup B=V$. The orientations of such a separation $\{A,B\}$ are then the oriented pairs $(A,B)$ and $(B,A)$, with involution $(A,B)^\ast=(B,A)$. We let $(A,B)\le (C,D)$ if and only if $A\subseteq C$ and $B\supseteq D$. The operations $\join$ and $\meet$ are given by $(A,B)\join (C,D)=(A\cup C,B\cap D)$, $(A,B)\meet (C,D)=(A\cap C,B\cup D)$.

\subsection{Submodularity and additional terminology}
Finally, we set up some terminology specific to this paper, mostly generalised versions of established terminology.

A separation system $\vS$ is a \emph{subsystem} of another separation system $\vS'$ if $\vS \subseteq \vS'$ and the involution on $\vS$ is the restriction of the involution on $\vS'$.
In particular, $\vS$ is a subset of $\vS'$ which is closed under the involution on $\vS'$.
If a subsystem $\vS$ of a universe $\vU$ is closed under joins and meets in $\vU$, we say that $\vS$ (together with the restrictions of $\join$ and $\meet$) is a \emph{subuniverse} of $\vU$.
For example, the bipartition universe $\cB(V)$ on a set $V$ is a subuniverse of the universe of set separations of $V$.


\cite{AbstractSepSys} considers submodular order functions for universes of separations, we will need the more general notion of such a function for arbitrary lattices.
Given a lattice $L$, a function $f\colon L\to \R^+_0$ 
is called \emph{submodular} if \[f(a\join b)+f(a\meet b)\le f(a)+f(b)\] for all $a,b\in L$.

If $U$ is a universe, then a submodular function on $\vU$ as a lattice is called a \emph{submodular order function} if it is \emph{symmetric}, that is $f(\vu)=f(\uv)$ for all $\vu\in \vU$. (Such a function can also be interpreted as a function from $U$ to $\R^+_0$ and we shall thus write $f(u)$ to mean $f(\vu)$.) This agrees with the notation from \cite{AbstractSepSys}.

We say that a subset $ P $ is \emph{order-induced submodular} in a lattice $ L $, or that \emph{the submodularity of $P$ in $L$ is order-induced in $L$}, if there exists some submodular function $f$ on $L$,
    such that $P = \{ a \in L \mid f(a) < k \}$ for some $k$.
In this case, we also say that $f$ \emph{induces the submodularity of $P$ in $L$} and that $f$ and $k$ \emph{induce the submodularity of $P$ in~$L$}.

Similarly, we say that a subsystem $\vS$ of a universe $\vU$ is \emph{order-induced submodular} in $\vU$, if there exists a submodular order function $f$ on $\vU$ and some $k\in \R^+_0$ which induce the submodularity of $\vS$ in $\vU$, i.e., such that $\vS =\vS_k$ where $\vS_k:= \{ \vs \in U \mid f(\vs) < k \}$. 


Given a poset $P'$ and some subset $P$ of $P'$ we say that $P$ is \emph{submodular in} $P'$, if, for all $a,b\in P$ the following holds:
there is a supremum of $a$ in $b$ in $P$ and this supremum also lies in $P'$, or\footnote{Note that this `or' is not exclusive.}
there is an infimum of $a$ in $b$ in $P$ and this infimum also lies in $P'$.\footnote{Recall that a \emph{supremum} of $a$ and $b$ inside a poset is a (unique) least common upper bound of $a$ and $b$,
and an \emph{infimum} a (unique) greatest common lower bound.}

If $P$ is submodular in $P$, we say that $P$ is \emph{submodular}. Note that, if $P$ is submodular in a lattice $L$, then for any $a,b\in P$ we have either $a\join b\in P$ or $a\meet b\in P$.

Analogously, given a subsytem $\vS$ of separation system $\vS'$, we call $\vS$ \emph{submodular in} $\vS'$ if the poset $\vS$ is submodular in the poset $\vS'$.
For a universe of separations $\vU$, a \emph{submodular subsystem $\vS\subseteq \vU$} shall be any subsystem $\vS$ of $\vU$ which is submodular in $\vU$.
Note that the notion of $\vS$ being \emph{(structurally) submodular in $\vU$} from \cite{AbstractSepSys} is equivalent to our notation of $\vS$ being a submodular subsystem of $\vU$.

\section{Witnessing submodularity externally} 
\label{sec:DM}
The traditional, external, notions of submodularity always require our separation system $\vS$ to be part of a universe  $ \vU $ of separations, even though, often, we are interested in~$ \vS $ and substructures therein only and do not particularly care about the shape of~$ \vU $. The only reason for keeping this ambient universe~$ \vU $ around is that we need to be able to express joins and meets of elements of~$ \vS $, and decide whether these lie in- or outside of~$ \vS $. The mathematical arguments exploiting the submodularity of~$ \vS $ never truly make use of~$ \vU $, but only of the knowledge that at least one of two opposing corner separations, $\vr \join \vs$ and $\rv \join \sv = (\vr\meet\sv)^*$, is always present in~$ \vS $, for all $\vr,\vs\in\vS$.

As discussed in the introduction, the simplest, most general, and thus most natural, form of submodularity for a separation system is intrinsic from its poset structure, where
 a poset~$ P=(P,\le) $ is submodular if all pairs~$ a,b\in P $ have a supremum or an infimum in~$ P $.
Yet almost all theorems in the theory of abstract separation systems are phrased in terms of some form of submodularity which is external, in some universe of separations, even when that universe bears no particular relevance on the result.

In this section we offer a way out: a method by which the submodularity of some $ \vS $ in itself can be reflected into a suitable universe of separations in such a way, that its submodularity is expressed externally.
If~$ \vS $ is a separation system which is submodular in some universe~$ \vU $, then~$ \vS $ is also submodular on its own. Here, we will show a converse to this: if a separation system~$ \vS $ is submodular on its own, then we can construct a universe~$ \vU $ which contains an isomorphic copy of the separation system~$ \vS $, i.e., there is an \emph{embedding} of $\vS$ into $\vU$.
We can choose this embedding in such a way, that the pre-existing joins and meets inside~$ \vS $ are preserved. More precisely, if~$ \vr $ and~$ \vs $ have a supremum~$ \vt $ in~$ \vS $, then after embedding~$ \vS $ into~$ \vU $ we will have~$ \vt=\vr\join\vs $, where the latter is measured in~$ \vU $. Thus our $\vU$ is chosen so that $\vS$ is \emph{submodular in} $\vU$.


\begin{THM}
    \label{thm:submodular}
	For every separation system~$ \vS $, finite or infinite, there exists a universe~$ \vU $ of separations and an embedding~$ \phi\colon\vS\to\vU $, with the property that~$ \phi(\vt)=\phi(\vr)\join \phi(\vs) $ if and only if~$ \vt $ is the supremum of~$ \vr $ and~$ \vs $ in~$ \vS $, and likewise~$ \phi(\vu)=\phi(\vr)\meet \phi(\vs) $ if and only if~$ \vu $ is the infimum of~$ \vr $ and~$ \vs $ in~$ \vS $.
     Moreover, if $\vS$ is finite, then $\vU$ can be chosen to be finite.
	
    In particular, if~$ \vS $ is submodular, then~$ \phi(\vS) $ is submodular in $\vU$.
\end{THM}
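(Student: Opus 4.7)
The plan is to take $\vU$ to be the Dedekind-MacNeille completion $\DM(\vS)$ of the poset underlying $\vS$, and to extend the involution~${}^{\ast}$ to it. Concretely, realise $\DM(\vS)$ as the lattice of Galois-closed pairs $(A,B)$, where $A\subseteq\vS$ is a down-set, $B\subseteq\vS$ is an up-set, $A$ coincides with the set of lower bounds of~$B$, and $B$ coincides with the set of upper bounds of~$A$. Order this by $(A,B)\le(A',B')$ iff $A\subseteq A'$, equivalently $B\supseteq B'$. The map $\phi\colon\vS\to\DM(\vS)$ sending $\vs$ to the pair formed by its principal down-set and principal up-set is an order embedding which, by a classical property of the Dedekind-MacNeille completion, preserves every supremum and every infimum that already exists in $\vS$.

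To promote $\DM(\vS)$ to a universe, I extend ${}^{\ast}$ by the formula $(A,B)^{\ast}:=(B^{\ast},A^{\ast})$, where for $X\subseteq\vS$ we set $X^{\ast}:=\{x^{\ast}:x\in X\}$. Because ${}^{\ast}$ is an order-reversing bijection on $\vS$, applying it pointwise interchanges down-sets with up-sets and swaps the roles of upper and lower bounds. This makes $(B^{\ast},A^{\ast})$ again a Galois-closed pair, and one verifies directly that the induced map on $\DM(\vS)$ is an order-reversing involution which is compatible with ${}^{\ast}$ along $\phi$. Thus $\vU:=\DM(\vS)$ is a universe and $\phi$ is an embedding of separation systems.

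With this in hand, the forward direction of each claimed equivalence is exactly the preservation of existing joins and meets under $\phi$. For the converses I use that $\phi$ is an order embedding: if $\phi(\vt)=\phi(\vr)\join\phi(\vs)$ in $\vU$, then $\phi(\vt)$ is a least upper bound of $\{\phi(\vr),\phi(\vs)\}$ in $\vU$, so pulling back through the embedding, $\vt$ is a least upper bound of $\{\vr,\vs\}$ in $\vS$ and hence their supremum; the argument for meets is dual. The finite case is immediate from $|\DM(\vS)|\le 2^{|\vS|}$. Finally, if $\vS$ is submodular, then for each $\vr,\vs\in\vS$ the relevant supremum or infimum exists in $\vS$, and by preservation this element equals $\phi(\vr)\join\phi(\vs)$ or $\phi(\vr)\meet\phi(\vs)$ in $\vU$; hence $\phi(\vS)$ is submodular in $\vU$.

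The main obstacle is confirming that $(B^{\ast},A^{\ast})$ is again Galois-closed, since everything downstream depends on this. Unpacking the definitions, $y\in B^{\ast}$ iff $y^{\ast}\in B$ iff $y^{\ast}$ is an upper bound of $A$, and converting this via order-reversal yields that $y$ is a lower bound of $A^{\ast}$; an analogous calculation identifies $A^{\ast}$ as the set of upper bounds of $B^{\ast}$. This is the only point at which the order-reversing property of the involution is genuinely used, and it is what upgrades the purely poset-theoretic Dedekind-MacNeille construction to one which produces a universe of separations rather than just a lattice.
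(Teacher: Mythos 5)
Your proposal is correct and follows essentially the same route as the paper: both take $\vU$ to be the Dedekind--MacNeille completion of $\vS$ and equip it with the involution induced by applying ${}^\ast$ pointwise after passing to upper bounds — your $(A,B)\mapsto(B^\ast,A^\ast)$ on Galois-closed pairs is exactly the paper's $X\mapsto X^{u\ast}$ on closed down-sets under the canonical correspondence $X\leftrightarrow(X,X^u)$. The verifications (closure under the new involution, involutivity, order-reversal, compatibility with $\phi$) match the paper's step for step.
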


The heavy lifting of~\cref{thm:submodular}'s proof is done by employing the \emph{Dedekind-MacNeille-completion}~\cite{MacNeille}, a lattice theoretic tool with which one can embed an arbitrary poset into a suitable lattice while preserving any pre-existing finite joins and meets.\footnote{The Dedekind-MacNeille-completion is more commonly used for infinite lattices, where it is used to embed a lattice into a complete lattice, hence the name. It is also known as the \emph{completion by cuts}.}
Our task then is to equip the resulting completion of the poset~$ \vS $ with an involution which turns it into a universe of separations, and which makes the embedding of~$ \vS $ into its Dedekind-MacNeille-completion an isomorphism onto its image.

To define this Dedekind-MacNeille-completion, we follow the notation of~\cite{LatticeBook}. Let~$ P $ be any poset, finite or infinite. Given a subset~$ X\sub P $ we write~$ X^\ell $ for the set of lower bounds of~$ X $ in~$ P $: the set of all~$ p\in P $ such that~$ p\le x $ for all~$ x\in X $. Similarly we write~$ X^u $ for the set of all upper bounds of~$ X $ in~$ P $. To improve readability we will omit braces when concatenating these operations, e.g., we shall write~$ X^{u\ell} $ rather than~$ (X^u)^\ell $.

The~\emph{Dedekind-MacNeille-completion} of~$ P $ is then given by
\[ \DM(P)\coloneqq\menge{X\sub P\mid X^{u\ell}=X} \]
using~$ \sub $ as the partial order. A result by MacNeille~\cite{MacNeille} asserts that~$ \DM(P) $ is indeed a lattice and, moreover, the map~$ \phi\colon P\to\DM(P) $ given by
\[ \phi(p)\coloneqq \menge{p}^\ell \]
is an embedding of the poset~$ P $ into~$ \DM(P) $ with the property that~$ \phi(r) $ is the supremum (resp.\ infimum) of~$ \phi(p) $ and~$ \phi(q) $ if and only if~$ r $ is the supremum (resp.\ infimum) of~$ p $ and~$ q $ in~$ P $. (Compare \cite{LatticeBook}*{Theorem~7.40}.)

To build some intuition about the Dedekind-MacNeille-completion, observe that for a singleton~$ \menge{p} $, the set~$ \menge{p}^u $ is simply the up-closure~$ \ucl{p} $ of~$ p $ in~$ P $. Moreover an element~$ q $ of~$ P $ is a lower bound of the up-closure of some~$ p $ precisely if~$ q\le p $, and hence~$ \menge{p}^{u\ell}=\menge{p}^\ell=\dcl{p} $. In particular, when applying any series of~$ {}^u $ and~$ {}^\ell $ to a singleton set~$ \menge{p} $, only the very last operation is relevant: for instance~$ \menge{p}^{\ell u\ell}=\menge{p}^{\ell} $, which shows that the map~$ \phi $ indeed takes its image in~$ \DM(P) $.

Let us now prove~\cref{thm:submodular}.

\begin{proof}[Proof of~\cref{thm:submodular}.]
	Let~$ \vS=(\vS,\le,{}^*) $ be a separation system. Let~$ \vU=\DM(\vS) $ be the Dedekind-MacNeille-completion of the poset~$ \vS $ with the embedding~$ \phi\colon\vS\to\vU $ given by~$ \phi(\vs)=\menge{\vs}^\ell $.
	
	For a set~$ X\sub\vS $ we write $ X^* $ for the point-wise involution~$ \menge{\xv\mid\vx\in X} $ of~$ X $. For readability we shall extend our convention to omit braces to include~$ {}^* $,~$ {}^u $, and~$ {}^\ell $. Clearly~$ X^{**}=X $ for all~$ X\sub\vS $.
	
	We define an involution~$ {}^\circledast $ on~$ \vU $ by letting
	\[ X^\circledast \coloneqq X^{u*} \]
	and claim that this turns~$ \vU $ into a universe of separations and~$ \phi $ into an isomorphism of separation systems between~$ \vS $ and its image in~$ \vU $. To verify this claim we need to ascertain the following: that~$ {}^\circledast $ takes its image in~$ \vU=\DM(\vS) $; that~$ {}^\circledast $ is an involution; that~$ {}^\circledast $ is order-reversing; and finally that~$ \phi $ commutes with the involution, i.e.\ that~$ \phi(\vs)^\circledast=\phi(\sv) $.
	
	Before we do this, observe that since the involution~$ {}^* $ of~$ \vS $ is order-reversing we have
	\[ X^{u*}=X^{*\ell}\qquad\tn{ and }\qquad X^{\ell*}=X^{*u} \]
	for all~$ X\sub\vS $. We shall be using these two equalities throughout the remainder of the proof.
	
	To see that~$ {}^\circledast $ takes its image in~$ \vU $, note that for~$ X\in\vU $ we have
	\[ (X^\circledast)^{u\ell}=X^{u*u\ell}=X^{u\ell*\ell}=X^{*\ell}=X^{u*}=X^\circledast\,, \]
	where the third equality used the definition of~$ \vU=\DM(\vS) $ to infer~$ X^{u\ell}=X $. Thus we indeed have~$ X^\circledast\in\vU $ by definition of~$ \vU=\DM(\vS) $.
	
	The map~$ {}^\circledast $ is an involution since
	\[ (X^\circledast)^\circledast=X^{u*u*}=X^{u\ell**}=X^{u\ell}=X\,, \]
	for~$ X\in\vU $, using again the definition of~$ \vU=\DM(\vS) $.
	
	To see that~$ {}^\circledast $ is order-reversing let~$ X,Y\in\vU $ with~$ X\sub Y $ be given; we need to show that~$ X^\circledast \supseteq Y^\circledast $. From~$ X\sub Y $ it follows that~$ X^u\supseteq Y^u $, which in turn implies~$ X^{u*}\supseteq Y^{u*} $. Thus indeed~$ X^\circledast\supseteq Y^\circledast $.
	
    We now show that~$ \phi(\vs)^\circledast=\phi(\sv) $ for all~$ \vs\in\vS $. So let~$ \vs\in\vS $ be given. Recall that~$ \menge{\vs}^{\ell u}=\menge{\vs}^u $ and $\phi(\vs) = \menge{\vs}^\ell$. Using this equality we find that
	\[ \phi(\vs)^\circledast=\phi(\vs)^{u*}=\menge{\vs}^{\ell u*}=\menge{\vs}^{u*}=\menge{\vs}^{*\ell}=\menge{\sv}^\ell=\phi(\sv)\,, \]
	as claimed.

    Since $\phi$ preserves the existing pairwise suprema and infima of the poset $\vS$, it is thus the desired embedding.
\end{proof}

We can phrase \cref{thm:submodular} more concisely, as follows:

\corsubmodular*

\section{Structural submodularity which is not order-induced} 
\label{sec:counterex_orderfun}
In this section we deal with the question of whether the submodularity of a submodular subsystem $\vS \subseteq \vU$ of a universe $\vU$ is always induced by some submodular order function $f$ on $\vU$, i.e., that $\vS = \vS_k$ for some $k$.
We will answer this question in the negative, even for distributive $\vU$, and thus show that submodularity in a universe is a proper generalisation of order-induced submodularity.

We consider the question first for partial lattices $P\subseteq L$ which are submodular in some lattice $L$. Recall that these are partial lattices $P \subseteq L$ such that for any two points $a,b \in P$ at least one of $a\join b$ and $a\meet b$ (taken in $L$) is in $P$.

One way to show that the submodularity of a given partial lattice is not order-induced is to find a sequence $a_1,a_2,\dots a_n$ of elements of a lattice $L$ so that every submodular function $f$ on $L$ for which $P$ is an $S_k$ would need to satisfy $f(a_1)<f(a_2)<\dots f(a_n)<f(a_1)$. Such a sequence may be found by finding a directed cycle in a digraph $D$ on $L$ where we draw an edge from $a$ to $b$ whenever every suitable submodular function on $L$ needs to satisfy $f(a)>f(b)$.

This motivates the following definition:
for $P \subseteq L$ we define the \emph{dependency digraph} $D = (L, E)$ of $P$ as a directed graph where $(a,b)$ is an edge in $E$ if and only if one of the following holds:
    \begin{itemize}
        \item $a\in L\sm P$ and $b\in P$;
        \item $a,b\in P$ and there is some $c\in P$ such that either
         \begin{itemize}
        \item $b = a \join c$ and $a \meet c \notin P$, or
        \item $b = a \meet c$ and $a \join c \notin P$;
    \end{itemize}
        \item $a,b\notin P$ and there is some $c\in P$ such that either
         \begin{itemize}
        \item $b = a \join c$ and $a \meet c \notin P$, or
        \item $b = a \meet c$ and $a \join c \notin P$.
    \end{itemize}
    \end{itemize}

Let us first show that given an order-induced submodular partial lattice $P\subseteq L$, the edges in the dependency digraph indeed witness that their start vertex has higher order than their end vertex.

\begin{LEM}
    If $P \subseteq L$ is order-induced submodular, witnessed by some $f$ and $k$,
    and $(a, b)$ is an edge in the dependency digraph of $P$,
    then $f(a) > f(b)$.
\end{LEM}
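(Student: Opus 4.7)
The plan is to dispatch the three bullets in the definition of the dependency digraph separately, noting that the second and third bullets admit a unified calculation.

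For the first bullet, where $a \in L \sm P$ and $b \in P$, I would simply unwind the definition $P = \menge{x \in L \mid f(x) < k}$: this forces $f(b) < k \le f(a)$, giving $f(a) > f(b)$ immediately.

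For the remaining two bullets, I would treat them together since the distinction between $a \in P$ and $a \notin P$ plays no role in the estimate. In both cases there is some $c \in P$ such that $b$ is one of the corners $a \join c$ or $a \meet c$ while the \emph{opposite} corner lies in $L \sm P$. Submodularity of $f$ on $L$ gives
\[ f(a) + f(c) \;\ge\; f(a \join c) + f(a \meet c)\,. \]
Writing $b'$ for the opposite corner (so $\menge{b,b'} = \menge{a \join c, a \meet c}$), this rearranges to
\[ f(b) \;\le\; f(a) + f(c) - f(b')\,. \]
By choice of $c$ and $b'$ we have $f(c) < k$ and $f(b') \ge k$, hence $f(c) - f(b') < 0$, and therefore $f(b) < f(a)$, as required.

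I do not anticipate any real obstacle: the proof is a direct unfolding of the definitions of order-induced submodularity and of the dependency digraph, combined with a single application of the submodular inequality. The only thing to watch is the bookkeeping that identifies, in each subcase, which of $a \join c$, $a \meet c$ is $b$ and which is the `missing' corner witnessing the edge — but once $b'$ is named, the same one-line estimate closes both subcases of both bullets.
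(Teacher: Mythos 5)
Your proof is correct and follows essentially the same route as the paper's: the first bullet is handled by unwinding the definition of $P$ as a sublevel set, and the remaining bullets by one application of the submodular inequality together with $f(c) < k \le f(b')$. The only cosmetic difference is that you merge the two corner cases via the notation $b'$, whereas the paper handles them as two (identical) without-loss-of-generality cases.
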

\begin{proof}
    Let $(a, b)$ be an edge in the dependency digraph. If $a\in L\sm P$ and $b\in P$ then $f(a)>f(b)$ since $f$ induces the submodularity of $P$ in $L$.
    
    If $a,b\in P$ we may assume without loss of generality that the edge between $a$ and $b$ exists because of some $c \in P$ with
    $b = a \join c$ and $a \meet c \notin P$.

    Because $f$ induces the submodularity of $P$ in $L$ we have $f(a \meet c) > f(c)$. Since $f$ is submodular \[
        f(a \join c) + f(a \meet c) \leq f(a) + f(c),
    \]
    and hence $f(b) = f(a \join c) < f(a)$, as required.
    
    Similarly, if $a,b\notin P$ we may assume without loss of generality that the edge between $a$ and $b$ exists because of some $c \in P$ with
    $b = a \join c$ and $a \meet c \notin P$.

    Because $f$ induces the submodularity of $P$ in $L$ we have $f(a \meet c) > f(c)$. Again, since $f$ is submodular \[
        f(a \join c) + f(a \meet c) \leq f(a) + f(c),
    \]
    and hence $f(b) = f(a \join c) < f(a)$, as required.
\end{proof}
Thus a directed cycle in the dependency digraph is an obstruction to the order-induced submodularity of $P$.
\begin{COR}
  If the dependency digraph of $P$ contains a directed cycle then $P$ is not order-induced submodular.
\end{COR}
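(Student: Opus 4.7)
The plan is to derive the corollary as an immediate contrapositive consequence of the preceding lemma, with essentially no extra work. Suppose for a contradiction that $P$ is order-induced submodular, witnessed by some submodular function $f$ on $L$ and some $k$. Let $a_1,a_2,\dots,a_n,a_{n+1}=a_1$ be the vertices of a directed cycle in the dependency digraph of $P$, so that $(a_i,a_{i+1})$ is an edge for every $i\in\{1,\dots,n\}$.

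Applying the previous lemma to each edge $(a_i,a_{i+1})$ of the cycle yields $f(a_i)>f(a_{i+1})$. Chaining these strict inequalities around the cycle gives
\[
    f(a_1) > f(a_2) > \cdots > f(a_n) > f(a_{n+1}) = f(a_1),
\]
which is absurd. Hence no such $f$ and $k$ can exist, and $P$ fails to be order-induced submodular.

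Since the work is all done by the lemma, there is no genuine obstacle to speak of; the only thing worth emphasising in the write-up is that the lemma gives a \emph{strict} inequality on every edge of the dependency digraph, which is exactly what is needed to make the cyclic chain of inequalities inconsistent. No assumption on the length of the cycle or on whether the vertices lie in $P$ or in $L\sm P$ is required, because the lemma's conclusion is uniform across all three types of edges in the definition of the dependency digraph.
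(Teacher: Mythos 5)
Your proof is correct and is exactly the argument the paper intends: the corollary is the immediate contrapositive of the preceding lemma, obtained by chaining the strict inequalities $f(a_i)>f(a_{i+1})$ around the cycle to reach the contradiction $f(a_1)>f(a_1)$. The paper leaves this step implicit, so there is nothing to add or amend.
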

Since every cycle in the dependency digraph $D$ of $P$ is completely contained in either $D[P]$ or $D[L\sm P]$, we sometimes consider these two subgraphs independently from each other, naming them  the \emph{inner dependency digraph $D[P]$} and the \emph{outer dependency digraph $D[L\sm P]$}

Each cycle in the dependency digraph has length at least $3$:
\begin{LEM}
  Let $P \subseteq L$ be submodular in $L$, then the dependency digraph of $P$ contains no 
  directed cycle of length two.
\end{LEM}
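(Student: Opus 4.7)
The plan is to suppose for a contradiction that $(a,b)$ and $(b,a)$ form a directed $2$-cycle with $a\neq b$. The first bullet of the definition produces edges only from $L\sm P$ to $P$, never the reverse, so no mixed edge can appear in a $2$-cycle; hence either $a,b\in P$ or $a,b\notin P$. In both situations the two edges are witnessed by elements $c,c'\in P$ taken from the second or third bullet, respectively. Of the four combinations in which $b$ is a join or meet of $a$ with $c$, and $a$ is a join or meet of $b$ with $c'$, two force $a\le b\le a$ (or its dual) and hence $a=b$. The remaining two are formally dual to each other after swapping the roles of $c$ and $c'$, so it suffices to treat the case $b=a\join c$ with $a\meet c\notin P$ together with $a=b\meet c'$ with $b\join c'\notin P$; in this situation $a<b$, $c\le b$, and $a\le c'$.

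The key step is a pair of equalities in $L$. From $c\le b$ one obtains $c\meet c'\le b\meet c'=a$, and combined with $c\meet c'\le c$ this gives $c\meet c'\le a\meet c$; the reverse inequality follows because $a\meet c\le a\le c'$ and $a\meet c\le c$. Dually, from $a\le c'$ one has $c\join c'\ge c\join a=b$, which combined with $c\join c'\ge c'$ yields $c\join c'\ge b\join c'$; the reverse inequality follows from $c\le b$. Hence
\[ c\meet c' \;=\; a\meet c \qquad\text{and}\qquad c\join c' \;=\; b\join c'. \]

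Since $a\meet c\notin P$ by hypothesis, the first equality yields $c\meet c'\notin P$. Applying the submodularity of $P$ in $L$ to the pair $c,c'\in P$ then forces $c\join c'\in P$; but $c\join c'=b\join c'\notin P$, a contradiction. The argument goes through verbatim in the case $a,b\notin P$, since only the witnesses $c,c'$ are required to lie in $P$ for submodularity to apply. The main obstacle is spotting the correct non-trivial subcase and recognising that the corners $c\join c'$ and $b\join c'$ (and similarly $c\meet c'$ and $a\meet c$) must coincide; once this collapse is established, a single application of submodularity to $\{c,c'\}$ closes the argument.
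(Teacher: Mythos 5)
Your proof is correct and takes essentially the same route as the paper's: after discarding the mixed and degenerate cases, both arguments produce witnesses $c,c'\in P$ (the paper calls them $c$ and $d$) and show that the two corners satisfy $c\join c'=b\join c'\notin P$ and $c\meet c'=a\meet c\notin P$, contradicting the submodularity of $P$ applied to the pair $c,c'$. The only cosmetic difference is that the paper first rules out comparability of $c$ and $d$ as a separate step, whereas your collapse argument covers that case uniformly.
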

\begin{proof}
As stated above, a cycle of length 2 cannot contain one vertex in $P$ and one in $L\sm P$.
Thus if the dependency digraph $D$ contains a cycle of length 2 between $a$ and $b$, then by the definition of the dependency digraph $a$ and $b$ are comparable in $\le$, so $a\le b$, say.
Note that either $a,b\in P$ or $a,b\notin P$.
In either case, as $(a,b)$ is an edge in $D$, there exists a $c\in P$ such that $a\join c=b$ and $a\meet c\not\in P$. Similarly, there exists a $d\in P$ such that $b\meet d=a$ and $b\join d\notin P$.

If $c\le d$ then $d\ge a$ and $d\ge c$ and thus $a\join c=b\le d$ contradicting the assumption that $b\join d\notin P$. Similarly, if $d\le c$ then $d\le c\le b$, again contradicting the assumption. Hence $c$ and $d$ are incomparable and thus $c\join d\in P$ or $c\meet d\in P$, as $c,d\in P$ and $P$ is submodular in $L$.
However, $b=a\join c\le d\join c$, thus $d\join c\ge b$, hence $d\join c\ge b\join d$, but also $d\join c\le d\join b$ as $c\le b$, and thus $d\join c=d\join b\notin P$. And similarly,  $a=d\meet b\ge d\meet c$, thus $d\meet c\le a\meet c$ but also $d\meet c\ge a\meet c$ and thus $d\meet c=a\meet c\notin P$.
 
 Thus $D$ cannot contain a cycle of length 2.
\end{proof}

Using the dependency digraph, we can give an example of a lattice $L$ together with a partial lattice $P\subseteq L$ which is submodular in $L$, but where this submodularity is not order-induced. Our example will use a universe of separations as its lattice, and a submodular separation system for the partial lattice.

In fact, our example consists of oriented bipartitions (equivalently: subsets) on a set of six elements.
The Hasse diagram of this example is displayed in \cref{fig:example_bip};
a formal description follows.

\begin{figure}[!hp]
  \begin{center}
      \includegraphics[width=.95\textwidth,trim=0 30 0 30,clip]{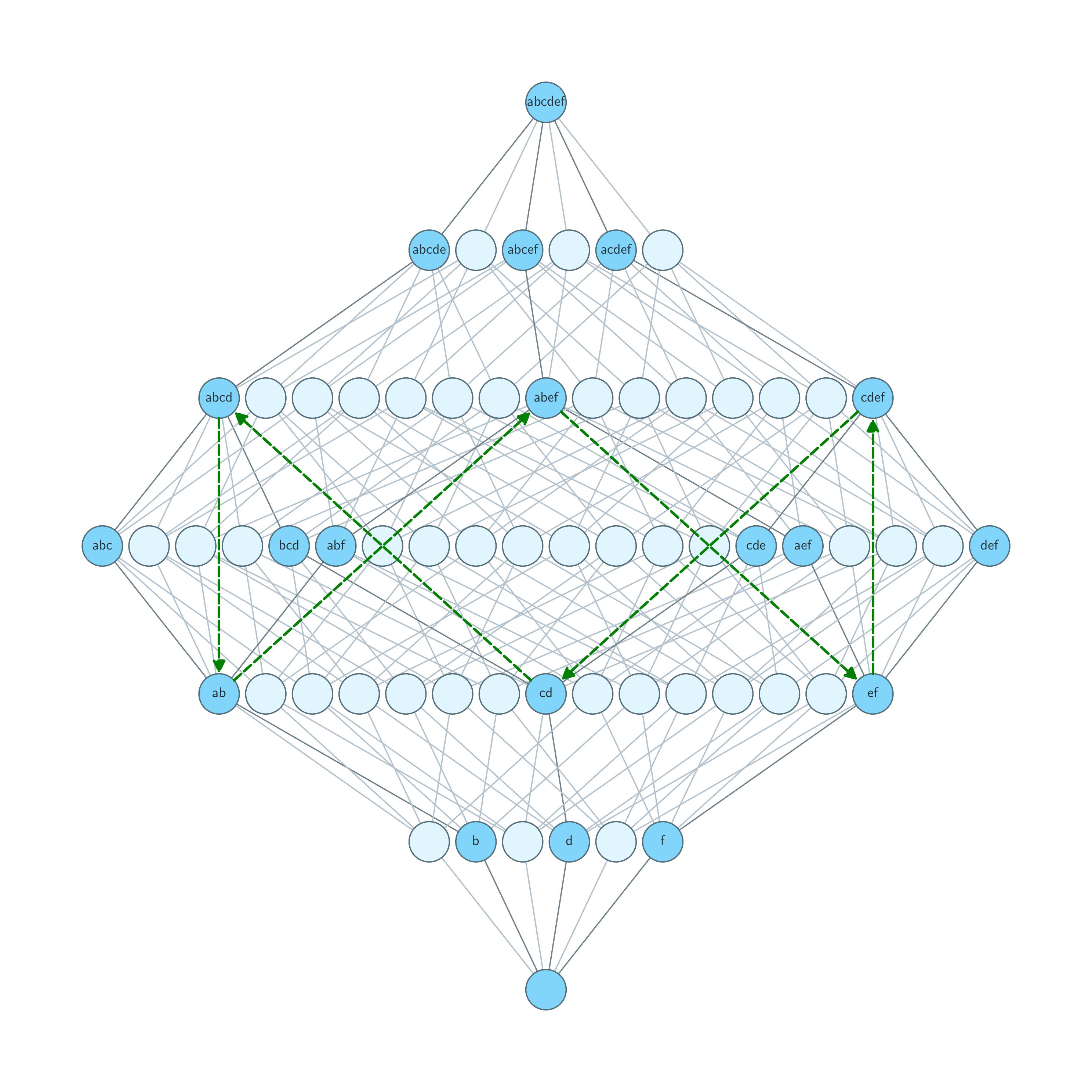}
  \caption{The Hasse diagram of $\vU$ from \cref{thm:example_bip}. For readability, only points in $\vS$ are labelled and only one side of each bipartition is denoted.}\label{fig:example_bip}
  \end{center}
  \end{figure}

Consider the universe $\vU = \cB(V)$ of bipartitions of $V = \{a,b,c,d,e,f\}$. In there we consider the separation system $\vS$ consisting of the orientations of the following unoriented bipartitions:
  \begin{align*}S=\{&\{\emptyset, V\},\\&\{\{b\},\{a,c,d,e,f\}\},\{\{d\},\{a,b,c,e,f\}\},\{\{f\},\{a,b,c,d,e\}\}\},\\&\{\{a,b\},\{c,d,e,f\}\},\{\{c,d\},\{a,b,e,f\}\},\{\{e,f\},\{a,b,c,d\}\},\\&\{\{a,b,c\},\{d,e,f\}\},\{\{a,b,f\},\{c,d,e\}\},\{\{a,e,f\},\{b,c,d\}\}\}.\end{align*}
  It is easy to see that $\vS$ is submodular in $\vU$. However, the dependency digraph of $\vS$ in $\vU$ contains the directed cycle
  \begin{multline*}(\{a,b,c,d\},\{e,f\})\rightarrow(\{a,b\},\{c,d,e,f\})\rightarrow(\{a,b,e,f\},\{c,d\})\rightarrow(\{e,f\},\{a,b,c,d\})\\\rightarrow(\{c,d,e,f\},\{a,b\})\rightarrow(\{c,d\},\{a,b,e,f\})\rightarrow(\{a,b,c,d\},\{e,f\}).\end{multline*}
  For example, there is an arc between $(\{a,b,c,d\},\{e,f\})$ and $(\{a,b\},\{c,d,e,f\})$ since \[(\{a,b,c,d\},\{e,f\})\meet (\{a,b,f\},\{c,d,e\})=(\{a,b\},\{c,d,e,f\})\] and \[(\{a,b,c,d\},\{e,f\})\join (\{a,b,f\},\{c,d,e\})=(\{a,b,c,d,f\},\{e\}),\] but $(\{a,b,c,d,f\},\{e\})$ is not an element of $\vS$.
  The existence of the remaining arcs in the cycle can be checked similarly.

This example proves the following theorem:

\counterexorderfun*

One might wonder if every example of a partial lattice with a cycle in its dependency digraph actually contains a cycle in the \emph{inner} dependency digraph. This is not the case, as an example we show the Hasse digram of such a lattice in \cref{fig:example_outer} and indicate the partial lattice inside this lattice as well as the cycle in the dependency digraph.
\begin{figure}[h]
\begin{center}
      \includegraphics[scale=.5,trim=0 20 0 20,clip]{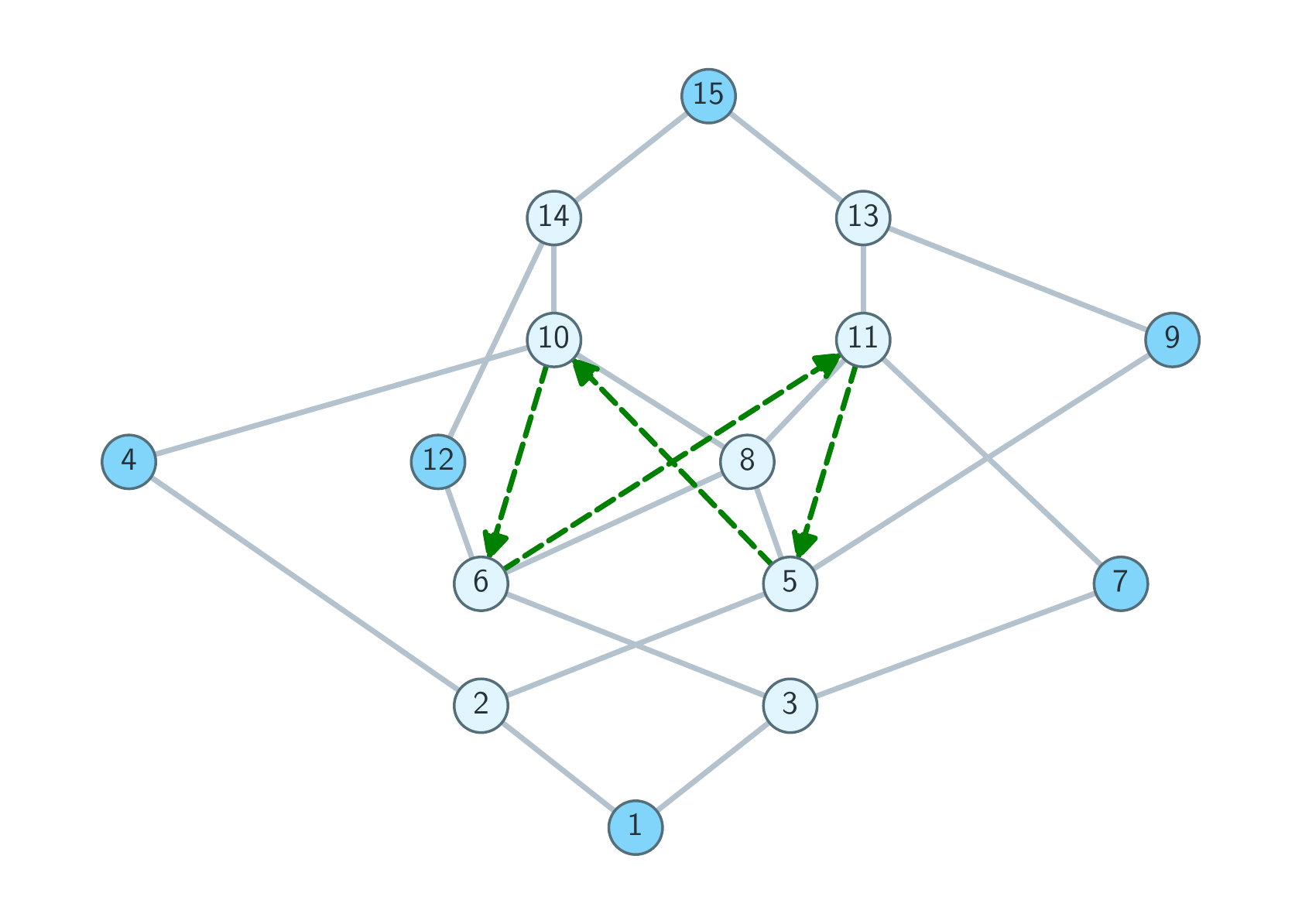}
\end{center}
\caption{The dark blue elements form a partial lattice, which does not contain a cycle in the inner dependency digraph, however the green dashed edges form a cycle in the outer dependency digraph}\label{fig:example_outer}
\end{figure}

However, we are not aware of any examples of submodular separation systems whose submodularity in a universe is not order-induced and whose dependency digraph is acyclic:
\begin{QUESTION}\label{que:depdisepsys}
 Does there exists a separation system  $\vS\subseteq \vU$ which is submodular in $\vU$, such that the dependency digraph of $\vS$ does not contain a cycle, but the submodularity of $\vS$ in $\vU$ is not order-induced?
\end{QUESTION}
We can ask the same question for a submodular partial lattice:
\begin{QUESTION}\label{que:depdilattice}
 Does there exists a partial lattice  $P\subseteq L$ which is submodular in the lattice $L$ such that the dependency digraph of $P$ does not contain a cycle, but the submodularity of $P$ in $L$ is not order-induced?
 \end{QUESTION}
These two questions are, in fact, equivalent. To see this, observe that a positive answer to \cref{que:depdisepsys} implies a positive answer to \cref{que:depdilattice}: if there exists a separation system  $\vS\subseteq \vU$ which is submodular in $\vU$, such that the dependency digraph of $\vS$ does not contain a cycle, but the submodularity is not order-induced, then we can consider $\vS$ as a partial lattice inside the lattice $\vU$ which still does not contain a cycle in its dependency digraph. However, if $k\in \R_0^+$ and $f_l:\vU\to \R_0^+$ would be a submodular function witnessing that $\vS$ is order-induced submodular as a partial lattice, then we could consider the function $f$ given by $f(\vs)=f_l(\vs)+ f_l(\sv)$ for every $\vs\in \vS$, which would then be a submodular order function for $\vU$ as a universe, and $f$ and $2k$ induce the submodularity of $\vS$ in $\vU$.

On the other hand, if there exists a partial lattice  $P\subseteq L$ which is submodular in the lattice $L$ such that the dependency digraph of $P$ does not contain a cycle, but the submodularity is not order-induced, we can construct a universe $\vU$ and a submodular subsystem  $\vS\subseteq \vU$, so that the dependency digraph of $\vS$ does not contain a cycle, but the submodularity of $\vS$ in $\vU$ is not order-induced, as follows:
let $L'$ be a copy of $L$ with reversed partial order (i.e.\ the poset-dual of $L$).
We let $\vU$ be the disjoint union $L \sqcup L'$, where we additionally declare $\vr \le \vs$ for all $\vr \in L$ and $\vs \in L'$.
The involution on $\vU$ is defined by mapping an element of $L$ to its respective copy in $L'$ and vice versa.
It is easy to see that this is a universe of separations and that $\vS=P\cup P'$ (where $P\subseteq L$ is as above and $P'\subseteq L'$ is the image of $P$ in $L'$) is a submodular subsystem of $\vU$.\footnote{Note, that in $\vU$ every separation is either small or co-small, i.e., for every $\vs\in \vU$ either $\vs\le \sv$ or $\sv\le \vs$.}
Moreover, $\vS$ is not order-induced submodular, since we can restrict any witnessing submodular order function on $\vU$ to a submodular function on $L$, which would then witness that the submodularity of $P$ in $L$ is order-induced.

The dependency digraph of $\vS$ cannot contain a cycle either, since any such cycle would result in a cycle in the dependency digraph of $L$ or $L'$: every edge in the dependency digraph of $\vU$ either is also an edge in the dependency digraph of $L$ or $L'$, or is an edge between $L$ and $L'$ which needs to be an edge between an element of $\vU\sm \vS$ and $\vS$. Thus, given any cycle in the dependency digraph of $\vU$ which meets both $L$ and $L'$, we can consider a maximal subpath of this cycle contained in $L$; there then needs to be a directed edge in the dependency of $L$ between the last and the first vertex of this path.

\begin{figure}[h]
\begin{center}
      \includegraphics[scale=.5,trim=0 20 0 20,clip]{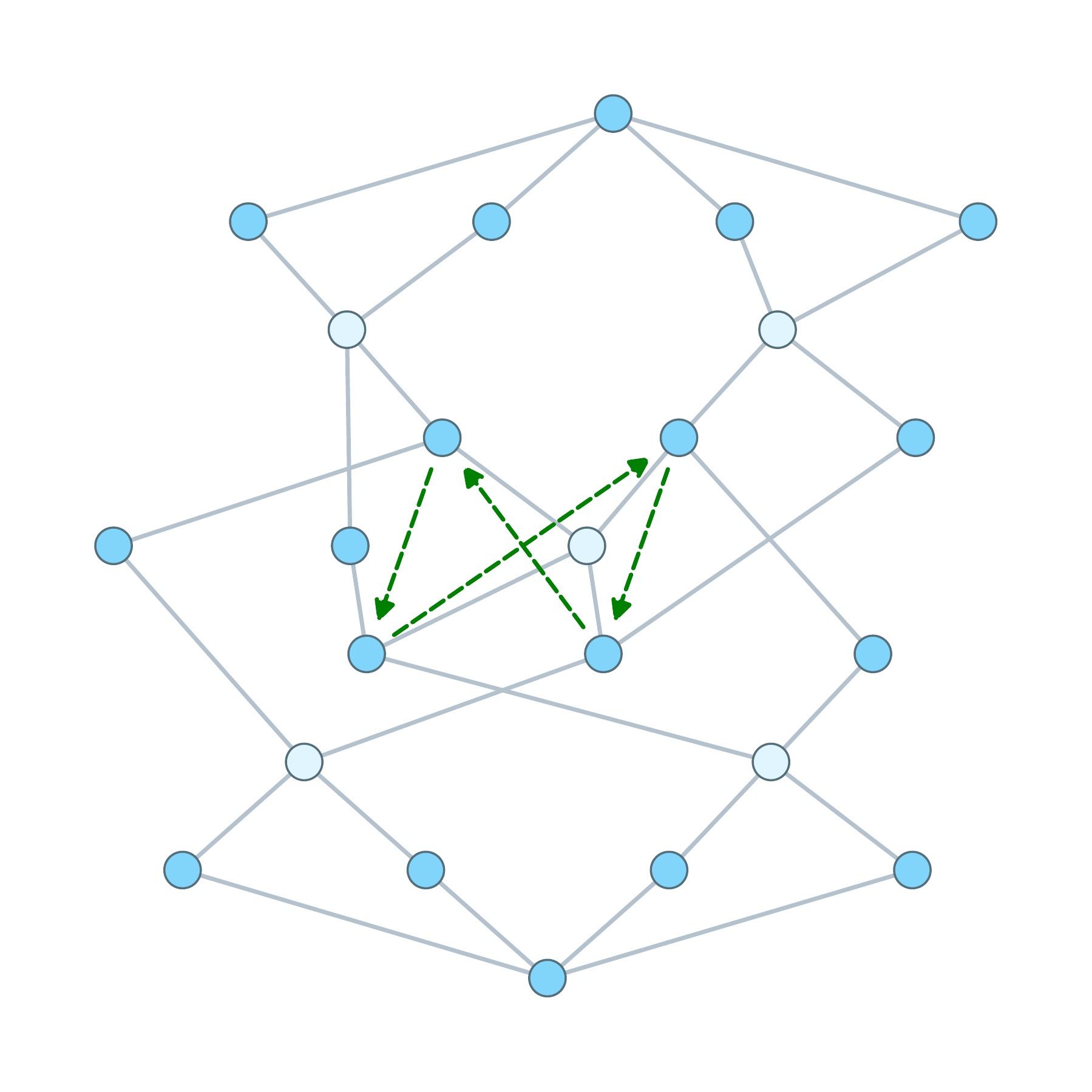}
\end{center}
\caption{The dark blue elements form a submodular partial lattice which, however it is embedded into a lattice, has the cycle indicated in green in its dependency digraph. }\label{fig:example_independent}
\end{figure}

Another question that one might ask is if at least in a weakened sense every submodular $\vS$ is an $\vS_k$:
whether for every submodular $\vS$ there exists a universe $\vU$ where $\vS$ can be embedded to be order-induced submodular.
After all, we know from \cref{sec:DM} that at the least there exists a universe in which $\vS$ is submodular, so it is reasonable
to suspect that we can even construct a universe where $\vS$ is order-induced submodular.

However, if we consider the inner dependency digraph of some $\vS$, then we can see that some of its edges will need to exist indepentently of the universe that $\vS$ is embedded into, as long as such an embedding preserves pre-existing (binary) joins and meets.
If some two elements of $\vS$ do not have, say, a supremum in $\vS$, then no matter what universe we embed $\vS$ into, their supremum in that universe will not lie in $\vS$, so the inner dependency digraph will have directed edges from either of these two elements to their infimum in $\vS$.

In \cref{fig:example_independent} we see the Hasse diagram of a submodular separation system, embedded into a universe, together with a cycle in its inner dependency digraph all of whose edges are of the type described above. This answers the question in the negative:

\begin{THM}
  There exists a submodular separation system which can not be embedded into any universe of separations in such a way that
  pre-existing binary joins and meets are preserved and such that it is order-induced submodular in that universe.
\end{THM}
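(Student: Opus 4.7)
The plan is to exhibit the submodular separation system $\vS$ drawn in \cref{fig:example_independent} and to argue that its \emph{inner} dependency digraph contains a directed cycle whose arcs are forced to appear regardless of the universe $\vU$ into which $\vS$ is embedded, provided the embedding preserves pre-existing binary joins and meets. By the corollary immediately following the first lemma of \cref{sec:counterex_orderfun}, this will rule out order-induced submodularity of $\vS$ in every such $\vU$.

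First I would verify that the depicted $\vS$ is genuinely submodular by running through its (finitely many) incomparable pairs and, for each, pointing to a supremum or an infimum lying in $\vS$. Along the way I would single out the six pairs $\{a_i, c_i\} \subseteq \vS$ which label the arcs of the green cycle: for each such pair exactly one of $a_i \join c_i$, $a_i \meet c_i$ is present in $\vS$ --- equal to the next vertex of the cycle --- while the opposite corner is absent from $\vS$ in the strong sense that no supremum (respectively infimum) of $a_i$ and $c_i$ of the corresponding flavour exists in the poset $\vS$ at all.

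The key step is then a universe-independence argument. Let $\phi\colon\vS \to \vU$ be any embedding into a universe of separations that preserves all binary joins and meets already present in $\vS$. For each of the six pairs above, the corner in $\vU$ of $\phi(a_i)$ and $\phi(c_i)$ corresponding to the corner missing in $\vS$ cannot lie in $\phi(\vS)$: otherwise it would equal $\phi(d)$ for some $d \in \vS$, and because $\phi$ is an order-embedding, a short check shows that $d$ would then be an infimum (or supremum) of $a_i$ and $c_i$ in $\vS$ --- contradicting its non-existence there. Hence all six arcs of the green cycle appear in the inner dependency digraph of $\phi(\vS) \subseteq \vU$, and the corollary recalled above finishes the proof.

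The main obstacle is simply producing the right example: what is needed is a submodular separation system whose intrinsically missing corners already close up into a directed cycle in its own inner dependency digraph, and such a configuration is not obviously available among small, symmetric systems one might first think of. Once the example of \cref{fig:example_independent} is in hand, however, both the submodularity check and the universe-independence step reduce to routine case analysis.
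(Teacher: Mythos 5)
Your proposal is correct and follows essentially the same route as the paper: the paper likewise points to the example of \cref{fig:example_independent}, observes that a corner which fails to exist as a supremum or infimum \emph{within} $\vS$ cannot land in $\phi(\vS)$ under any join/meet-preserving embedding, and concludes via the cycle-in-the-dependency-digraph corollary. The only difference is one of emphasis --- you spell out the order-embedding argument for why such a corner cannot equal $\phi(d)$ for any $d\in\vS$, which the paper leaves implicit.
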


\section{Extending a submodular function} \label{sec:extend}
Our aim in this section is to better understand for what kind of submodular separation systems the submodularity is order-induced.
We investigate inhowfar the existence of a submodular function depends on the surrounding universe $\vU$, that is, if we have an order function $f$ which induces the submodularity of some $\vS$ in a subuniverse $\vU' \subseteq \vU$, we ask whether we can extend $f$ to $\vU$ in such a way that it induces the submodularity of $\vS$ in $\vU$.

We give partial answers to this question: firstly that submodular functions can be extended in this way from an \emph{interval} in a universe and, secondly,
that for every subuniverse $\vU'$ of a universe $\vU$ there exists a submodular function $f$ and some $k$, such that $\vU' = f^{-1}([0,k])$.

It suffices to first consider these problems for submodular functions on lattices, rather than submodular order functions on universes of separations:
if $f'\colon \vU' \to \R^+_0$ is a submodular function on $\vU' \supseteq \vU$ which agrees on $\vU$ with some submodular \emph{order} function $f\colon \vU \to \R^+_0$,
then we can define a submodular \emph{order} function $\bar f$ on $\vU'$ which agrees with $f$ by setting \[
    \bar f(\vs) = \frac{f'(\vs) + f'(\sv)}{2}.
\]
We will then easily see that, in both cases, this function is as desired.

For the first theorem, recall that an \emph{interval} in a lattice $L$ is, for some $x,y\in L$, a subset $[x,y] = \{{s\in L} \mid x\le s\le y\}$. Every such interval forms a sublattice.
The following result shows that we can extend a submodular function defined on an interval.

\begin{THM}\label{lem:lattice_expand}
 Let $L$ be a lattice and $L'=[x,y]\subseteq L$ an interval in $L$.
 Suppose that $f\colon L'\to \R^+_0$ is a submodular function on $L'$ with maximum value $k$.
 Then there exists a submodular function $g\colon L\to \R^+_0$ such that $g(z)=f(z)$ for all $z\in L'$ and $g(z)>k$ for all $z\notin L'$.
\end{THM}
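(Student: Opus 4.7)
My plan is to write $g$ as a sum of two submodular pieces: a submodular extension $\bar f$ of $f$ to all of $L$, and a penalty term $M\phi$ that vanishes on $L'$ but is at least $1$ outside, so that choosing $M>k$ forces $g>k$ off $L'$ while keeping $g=f$ on $L'$.

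For the penalty, I will define $\phi\colon L\to\R^+_0$ by $\phi(z):=[z\not\geq x]+[z\not\leq y]$, where $[\,\cdot\,]$ denotes an Iverson bracket. Each indicator is submodular on $L$ by a short case check: for $[z\not\leq y]$, if $a\leq y$ then $a\meet b\leq y$, whereas if $a\not\leq y$ then $a\join b\not\leq y$, so in every case
\[
[a\join b\not\leq y]+[a\meet b\not\leq y]\leq[a\not\leq y]+[b\not\leq y];
\]
the argument for $[z\not\geq x]$ is symmetric. Hence $\phi$ is submodular as a sum of two submodular functions, $\phi\equiv 0$ on $L'=[x,y]$, and $\phi\geq 1$ on $L\setminus L'$ (since $z\notin[x,y]$ means $z\not\geq x$ or $z\not\leq y$).

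For $\bar f$ the natural candidate is $\bar f(z):=f\bigl((z\join x)\meet y\bigr)$, using the projection $\pi(z)=(z\join x)\meet y$, which maps $L$ into $L'$ and fixes $L'$ pointwise. In a distributive $L$ the projection $\pi$ is a lattice homomorphism and so $\bar f=f\circ\pi$ is submodular by composition, making the construction immediate. For a general lattice $\pi$ need only satisfy $\pi(a)\join\pi(b)\leq\pi(a\join b)$ and $\pi(a\meet b)\leq\pi(a)\meet\pi(b)$, so one instead shows existence of a submodular extension $\bar f$ by a more refined argument — for instance, processing the elements of $L\setminus L'$ inductively along a linear extension of $\leq$ and at each step assigning a value $\bar f(z)\geq 0$ consistent with the submodularity constraints imposed by the previously-processed elements.

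Setting $g(z):=\bar f(z)+M\,\phi(z)$ with any $M>k$ then yields the required function: on $L'$ the penalty vanishes and $\bar f=f$, so $g=f$; off $L'$ we have $g(z)\geq 0+M>k$; and $g$ is submodular as a sum of two submodular functions. The main obstacle is verifying that the extension $\bar f$ really exists for non-distributive $L$, i.e.\ that the inductive process is always feasible. This amounts to showing that the system of submodularity inequalities imposed at each step is consistent, and here the interval hypothesis $L'=[x,y]$ is essential: having both a bottom $x$ and a top $y$ constrains how values of $\bar f$ propagate through elements of $L\setminus L'$ and is what ultimately makes the inductive step possible.
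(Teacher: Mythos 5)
Your reduction $g=\bar f+M\phi$ is a clean idea, and the penalty part is sound: each of $[z\not\leq y]$ and $[z\not\geq x]$ is indeed submodular on any lattice (your case check is correct), so $\phi$ vanishes exactly on $[x,y]$ and contributes at least $M>k$ elsewhere. The reduction correctly shifts the entire burden onto producing a nonnegative submodular extension $\bar f$ of $f$ to all of $L$. But that extension is precisely where the proof stops being a proof. The candidate $\bar f=f\circ\pi$ with $\pi(z)=(z\join x)\meet y$ works only when $\pi$ is a lattice homomorphism, which you correctly note requires distributivity; the theorem, however, is stated for arbitrary lattices (and the paper later applies it to intervals in general universes, not just distributive ones). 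For the general case you offer only a sketch --- process $L\sm L'$ along a linear extension and assign values consistent with the accumulated submodularity constraints --- with no argument that this system of inequalities is feasible. It is not: when a new element $z$ arises as $a\join b$ for already-processed $a,b$, the constraint forces $\bar f(z)\leq \bar f(a)+\bar f(b)-\bar f(a\meet b)$, and nothing you have set up prevents the right-hand side from being negative, nor reconciles the several upper bounds coming from different pairs with join $z$. Since the existence of such an extension is essentially the whole content of the theorem, this is a genuine gap, not a detail.

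For comparison, the paper does not go through an abstract extension step at all. It partitions $L\sm L'$ into the elements below $y$, the elements above $x$, and the rest, defines recursive level functions $dl$ and $ul$ measuring distance from $\bot$ and $\top$, and assigns the explicit values $M(2-2^{-dl(z)})$, $M(2-2^{-ul(z)})$ and $4M$ with $M=2^l\cdot k$; submodularity is then verified directly by a ten-case analysis. The geometric decay in the levels is what makes the join/meet inequalities close up in a general lattice --- exactly the quantitative information your inductive sketch would need to supply. If you want to salvage your architecture, you could keep the penalty $M\phi$ and prove the extension lemma by an explicit construction of this kind; as written, the feasibility claim is asserted rather than proved.
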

\begin{proof}
Let us denote as $L^{\downarrow}$ the set of all $z\in L\sm L'$ such that $z\le y$, as $L^{\uparrow}$ the set of all $z\in L\sm L'$ such that $z\ge x$ and as $L^{\leftrightarrow}$ the set of all $z\in L\sm L'$ such that neither $z\le y$ nor $z\ge x$.
Note that $L^{\downarrow}, L^\uparrow, L^\leftrightarrow$ and $L'$ together form a partition of $L$.

 For $z\in L$ such that $z\le y$ we define its \emph{down-level $dl(z)$} recursively as follows: assign $dl(\bot)=0$ for the bottom element $\bot$ of $L$. Now $dl(z):=\max\{dl(z')+1\mid z'<z\}$ for all other $z \in L$. Similarly, for $z\in LL$ such that $z\ge x$ we define its \emph{up-level $ul(z)$} recursively: we assign $ul(\top)=0$ for the top element $\top$ of $L$. Now $ul(z):=\max\{ul(z')+1\mid z'>z\}$.
 
 Let $l$ be the maximum possible level (up or down) and let $M=2^l\cdot k > k$.
 We now define $g$ as follows:
 \begin{align*}
  g(z)=
  \begin{cases}
  f(z) & z\in L'\\
  M\cdot (2-2^{-dl(z)})& z\in L^{\downarrow}\\
  M\cdot (2-2^{-ul(z)})& z\in L^{\uparrow}\\
  4\cdot M\, & z \in L^\leftrightarrow
  \end{cases}
 \end{align*}
To verify that this function is submodular we distinguish the possible cases which can occur for two incomparable elements $a,b \in L$. Note that  in the case of comparable elements, submodularity is trivially satisfied, so we suppose they are incomparable.
\begin{description}[style=nextline]
\item[The case $a,b\in L^{\leftrightarrow}$.]
 By construction, the maximal value of $g$ is $4\cdot M$, thus \[g(a\join b)+g(a\meet b)\le 4\cdot M+4\cdot M=g(a)+g(b).\]
\item[The case $a\in L^{\uparrow},b\in L^{\leftrightarrow}$.]
 By the definition of $L^{\uparrow}$, we have $a\join b\in L^{\uparrow}$ and $ul(a)>ul(a\join b)$, thus \[
 \begin{split}
 g(a\join b)+g(a\meet b) &\le M\cdot (2-2^{-ul(a\join b)})+4\cdot M \\ &< M\cdot (2-2^{-ul(a)})+4\cdot M =g(a)+g(b).
 \end{split}
 \]
\item[The case $a\in L^{\downarrow},b\in L^{\leftrightarrow}$.]
 Analogous to the above.
\item[The case $a\in L',b\in L^{\leftrightarrow}$.]
 By the definition of $L^{\uparrow}$, we have, since $a\join b\ge a\ge x$, that $a\join b\in L^{\uparrow}\cup L'$ and similarly, $a\meet b\in L^{\downarrow}\cup L'$. Thus, we have \[g(a\join b)+g(a\meet b)\le 2M+2M\le g(b)\le g(a)+g(b).\]
\item[The case $a,b\in L^{\uparrow}$.]
 Suppose without loss of generality that $ul(a)\le ul(b)$. By the definition of $L^{\uparrow}$ and $ul$, we have $a\join b\in L^{\uparrow}$ and $ul(a\join b)<ul(a)$.
 Furthermore $a\meet b\in L^{\uparrow}\cup L'$, so in any case $g(a\meet b)<2M$. We calculate 
  \begin{align*}g(a\join b)+g(a\meet b)&<M\cdot (2-2^{-(ul(a)-1)})+2M \\ &=4M-M(2^{-ul(a)}+2^{-ul(a)})\\&\le 4M-M(2^{-ul(a)}+2^{-ul(b)})=g(a)+g(b).\end{align*}
\item[The case $a,b\in L^{\downarrow}$.]
 Analogous to the above.
\item[The case $a\in L^{\downarrow},b\in L^{\uparrow}$.]
 By construction $a\meet b\in L^{\downarrow}$ and $a\join b\in L^{\uparrow}$. Moreover, by the definition of $g$ we have $g(a\meet b)\le g(a)$ and $g(a\join b)\le g(b)$ and thus \[g(a\meet b)+g(a\join b)\le g(a)+g(b).\]
\item[The case $a\in L',b\in L^{\uparrow}$.]
  By the definition of $L^{\uparrow}$, we have $a\join b\in L^\uparrow$. Moreover $ul(a\join b) < ul(b)$, by the definition of $g$ and choice of $M$, we thus have $g(a\join b)\le g(b)-k$. Additionally, $g(a\meet b)\in L'$, since $x\le a\meet b$ and $a\meet b\le a\le y$. Thus, by the definition of $k$, we have $g(a\meet b)\le g(a)+k$ and thus \[g(a\join b)+g(a\meet b)\le g(b)-k+g(a)+k=g(a)+g(b).\]
\item[The case $a\in L',b\in L^{\downarrow}$.]
   Analogous to the above.
\item[The case $a,b\in L'$.]
  Immediate, by the submodularity of $f$.
\end{description}
Since furthermore $g(z)>k$ whenever $z\in L\sm L'$, by the definition of $M$, the function $g$ is as claimed.
\end{proof}

This theorem will also serves as a tool in proving the second theorem, which is the following:
\begin{THM}\label{thm:sublattice_function}
 Let $L$ be a distributive lattice and $L'\subseteq L$ a sublattice. Then there exists a submodular function $f\colon L\to \R^+_0$ and a $k\in \R_0^+$ such that $L'=f^{-1}([0,k])$.
\end{THM}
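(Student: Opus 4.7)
The plan is to proceed in two stages: first reducing to the case where $L'$ contains the top and bottom of $L$, then constructing a suitable submodular function directly.

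For the reduction I would set $M\coloneqq[\bot_{L'},\top_{L'}]$, the interval in $L$ spanned by the top and bottom of $L'$. Being an interval in a distributive lattice, $M$ is itself a finite distributive lattice, and $L'$ becomes a sublattice of $M$ that now contains $\bot_M$ and $\top_M$. Assuming I can construct a submodular function $f_0\colon M\to\R^+_0$ whose zero-set is exactly $L'$, with maximum value $k_0$, then \cref{lem:lattice_expand} applied to the interval $M\subseteq L$ will yield a submodular extension $g\colon L\to\R^+_0$ with $g|_M=f_0$ and $g(z)>k_0\ge 0$ for every $z\in L\sm M$. Setting $k\coloneqq 0$ then gives $L'=g^{-1}([0,k])$, as required.

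For the construction on $M$, for each $z\in M$ I would set
\[
z^+\coloneqq\bigwedge\{w\in L'\mid w\ge z\}\quad\text{and}\quad z^-\coloneqq\bigvee\{w\in L'\mid w\le z\},
\]
both of which are well-defined elements of $L'$ because $\bot_M,\top_M\in L'$ and $L'$ is closed under finite meets and joins. Clearly $z^-\le z\le z^+$, with equality throughout if and only if $z\in L'$. Letting $r$ denote the rank function of $M$, which is modular because $M$ is a finite distributive lattice, I would then define
\[
f_0(z)\coloneqq r(z^+)-r(z^-),
\]
which is non-negative and vanishes exactly on $L'$.

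The hard part will be verifying submodularity of $f_0$, though I expect this to be purely bookkeeping. The key observation is that, since $L'$ is closed under joins and meets, $z_1^+\join z_2^+$ and $z_1^+\meet z_2^+$ lie in $L'$ and dominate $z_1\join z_2$ and $z_1\meet z_2$ respectively, so $(z_1\join z_2)^+\le z_1^+\join z_2^+$ and $(z_1\meet z_2)^+\le z_1^+\meet z_2^+$; symmetric reversed inequalities hold for ${}^-$. Combining the ${}^+$ inequalities with monotonicity and modularity of $r$ yields $r((z_1\join z_2)^+)+r((z_1\meet z_2)^+)\le r(z_1^+)+r(z_2^+)$, while the ${}^-$ inequalities analogously give $r((z_1\join z_2)^-)+r((z_1\meet z_2)^-)\ge r(z_1^-)+r(z_2^-)$; subtracting produces the desired inequality $f_0(z_1\join z_2)+f_0(z_1\meet z_2)\le f_0(z_1)+f_0(z_2)$, finishing the argument.
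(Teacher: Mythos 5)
Your argument is correct, and it shares the paper's outer strategy -- reduce to the interval $M=[\bot_{L'},\top_{L'}]$ and then push the function out to all of $L$ via \cref{lem:lattice_expand} -- but the construction on $M$ is genuinely different. The paper's \cref{thm:lattice_function_dense} passes to the Birkhoff representation $L=\cO(P)$, sets $X_p=\bigcap\{X\in L'\mid p\in X\}$, and takes $f(Y)=\sum_{p\in Y}\abs{X_p\sm Y}$, checking submodularity by a direct counting computation. You instead use the two closure operators $z\mapsto z^+$ and $z\mapsto z^-$ onto $L'$ and measure the gap $r(z^+)-r(z^-)$ with the rank function. Your four comparison inequalities are all valid (in each case the right-hand side lies in $L'$ and is an upper, respectively lower, bound for the relevant join or meet), and together with monotonicity and modularity of $r$ -- which holds because a finite distributive lattice is modular and hence graded with a modular rank function, a fact you should cite or note explicitly -- they yield submodularity of $f_0$; strict monotonicity of $r$ gives $f_0>0$ exactly off $L'$, and then $g(z)>k_0\ge 0$ outside $M$ makes the threshold $k=0$ work. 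Your route buys a representation-free and arguably slicker verification of submodularity; the paper's route produces an explicit combinatorial formula on $\cO(P)$ and reuses the Birkhoff machinery it needs anyway in \cref{sec:subdec}. The two functions are related but not equal: your $f_0(Y)=\abs{Y^+}-\abs{Y^-}$ with $Y^+=\bigcup_{p\in Y}X_p$, whereas the paper counts the sets $X_p\sm Y$ with multiplicity over $p\in Y$.
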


\Cref{lem:lattice_expand} allows us to first prove \cref{thm:sublattice_function} only for the special case of sublattices $L'$ which include the top and bottom element of $L$, and to then handle general sublattices by combing that result with \cref{lem:lattice_expand}.

\begin{LEM}\label{thm:lattice_function_dense}
 Let $L$ be a distributive lattice and $L'\subseteq L$ a sublattice, such that $L$ and $L'$ have the same top and the same bottom element. Then there exists a submodular function $f\colon L\to \R^+_0$ such that $L'=f^{-1}(0)$.
\end{LEM}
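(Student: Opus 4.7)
My plan is to invoke the Birkhoff representation theorem (\cref{thm:birkhoff_book}) to identify $L$ with $\cO(\cJ(L))$ via $\eta$, so that the map $r\colon L\to\R^+_0$ defined by $r(a)\coloneqq |\eta(a)|$ is a rank function that is \emph{modular}: since joins in $\cO(\cJ(L))$ are unions and meets are intersections, the identity $|A\cup B|+|A\cap B|=|A|+|B|$ gives $r(a\join b)+r(a\meet b)=r(a)+r(b)$ for all $a,b\in L$. In particular $r$ is order-preserving, and $r(a)<r(b)$ whenever $a<b$.

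For each $a\in L$ I then define
\[
a^-\coloneqq\bigvee\{\,c\in L'\mid c\le a\,\}\quad\text{and}\quad a^+\coloneqq\bigwedge\{\,c\in L'\mid c\ge a\,\}.
\]
These elements lie in $L'$: since $L'$ is a finite sublattice containing both $\bot$ and $\top$, both sets are non-empty, and closure under $\join$ and $\meet$ puts the supremum and infimum back into $L'$. By construction $a^-\le a\le a^+$, and $a^-=a^+$ holds precisely when $a\in L'$. I then set $f(a)\coloneqq r(a^+)-r(a^-)$. Non-negativity is immediate from $a^-\le a^+$ together with the monotonicity of $r$; and strict monotonicity of $r$ implies $f(a)=0\Rightarrow a^-=a^+\Rightarrow a=a^-\in L'$, while $a\in L'$ conversely yields $a^-=a=a^+$ and hence $f(a)=0$, so $f^{-1}(0)=L'$.

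The heart of the argument is the submodularity of $f$. I would first record the two structural identities $(a\join b)^+=a^+\join b^+$ and $(a\meet b)^-=a^-\meet b^-$, both of which follow immediately from the minimality/maximality definitions combined with the closure of $L'$ under $\join$ and $\meet$. The corresponding identities on the other two sides do \emph{not} hold in general, but one easily verifies the inequalities $(a\join b)^-\ge a^-\join b^-$ and $(a\meet b)^+\le a^+\meet b^+$, since the right-hand sides already lie in $L'$ and bracket $a\join b$ and $a\meet b$ on the relevant side. Substituting the two identities into $f(a\join b)+f(a\meet b)$ and applying modularity of $r$ to the pairs $(a^+,b^+)$ and $(a^-,b^-)$ to rewrite $f(a)+f(b)$, the difference collapses to
\[
f(a)+f(b)-f(a\join b)-f(a\meet b)=\bigl(r(a^+\meet b^+)-r((a\meet b)^+)\bigr)+\bigl(r((a\join b)^-)-r(a^-\join b^-)\bigr),
\]
and both bracketed quantities are non-negative by the two inequalities above combined with the monotonicity of $r$.

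The main obstacle I expect is purely bookkeeping: keeping the eight bracketing elements $a^\pm,b^\pm,(a\join b)^\pm,(a\meet b)^\pm$ straight and verifying that the two slack inequalities genuinely point in the direction required for the final expression to come out non-negative once the modularity of $r$ is applied. Everything else---the construction of $r$ from Birkhoff, the existence of $a^\pm$, and the identification $f^{-1}(0)=L'$---is essentially forced on us by the setup.
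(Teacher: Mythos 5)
Your proof is correct, but it takes a genuinely different route from the paper's. The paper also starts from the Birkhoff representation $L\cong\cO(\cJ(L))$, but then works one-sidedly: for each join-irreducible $p$ it takes $X_p$, the smallest element of $L'$ containing $p$, and defines $f(Y)=\sum_{p\in Y}\abs{X_p\sm Y}$, verifying submodularity by a direct set-counting computation and $f^{-1}(0)=L'$ by noting that $Y=\bigcup_{p\in Y}X_p$ would force $Y\in L'$. You instead sandwich each $a$ between its best $L'$-approximations $a^-\le a\le a^+$ from below and above and measure the gap with the modular rank $r=\abs{\eta(\cdot)}$; the two exact identities $(a\join b)^+=a^+\join b^+$ and $(a\meet b)^-=a^-\meet b^-$ together with the two slack inequalities do exactly what you claim, and the final cancellation is right. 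Your argument arguably isolates the mechanism more cleanly (the only role of distributivity is to supply a strictly monotone modular function, so your proof in fact works verbatim for modular lattices, where the height function serves the same purpose), whereas the paper's single explicit formula is what it later reuses and is more obviously tied to the set representation. One small presentational point: you should say explicitly that $L$ is finite (as the paper assumes globally), since the non-emptiness and membership in $L'$ of the joins and meets defining $a^\pm$ rely on $L'$ being closed under \emph{finite} joins and meets.
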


\begin{proof}
 By the Birkhoff representation theorem (\cref{thm:birkhoff_book}) we may suppose without loss of generality that $L=\cO(P)$, for some poset $P$.
 We may thus interpret the elements of $L$ (and thus also those of $L'$) as subsets of $P$.

 For every element $p\in P$ let $E_p$ be the set of elements of $L'$ which contain $p$. In particular, the top element of $L$ lies in $E_p$, so $E_p$ is non-empty. Thus, we can consider, for every $p\in P$, the set $X_p$ given by $\bigcap_{X\in E_p} X$. Note that $p$ is an element of $X_p$. 

 Observe that, since $L'$ is a sublattice, we have $X_p\in L'$ for every $p$. Given some $Y\in L$ we define $f(Y)$ by summing, over all $p$ in $Y$, the number of elements of $X_p$ that do not lie in $Y$. Formally, \[
    f(Y) = \sum_{p \in Y} \abs{X_p \sm Y}.
 \]
 This function is submodular, since for all $X,Y \in L$ we can calculate as follows \begin{align*}
     &f(X) + f(Y) = \sum_{p \in Y} \abs{X_p \sm Y} + \sum_{p \in X} \abs{X_p \sm X} \\
                 &= \sum_{p \in X \cap Y} (\abs{X_p \sm Y} + \abs{X_p \sm X}) + \sum_{p \in Y \sm X} \abs{X_p \sm Y} + \sum_{p \in X \sm Y} \abs{X_p \sm X} \\
                 &= \sum_{p \in X \cap Y} (\abs{X_p \sm (X\cap Y)} + \abs{X_p \sm (X \cup Y)}) + \sum_{p \in Y \sm X} \abs{X_p \sm Y} + \sum_{p \in X \sm Y} \abs{X_p \sm X} \\
                 &= f(X \cap Y) + \sum_{p \in X \cap Y} \abs{X_p \sm (X \cup Y)}+ \sum_{p \in Y \sm X} \abs{X_p \sm Y} + \sum_{p \in X \sm Y} \abs{X_p \sm X} \\
                 &\ge f(X \cap Y) + \sum_{p \in X \cap Y} \abs{X_p \sm (X \cup Y)}+ \sum_{p \in Y \sm X} \abs{X_p \sm (X \cup Y)} + \sum_{p \in X \sm Y} \abs{X_p \sm (X \cup Y)} \\
                 &= f(X \cap Y) +  \sum_{p \in X \cup Y} \abs{X_p \sm (X \cup Y)}\\
                 &= f(X\cap Y) + f(X \cup Y).
\end{align*}

 Thus all that is left to show is that $f(Y)>0$ for every $Y\in L\sm L'$. To see this, we observe that, since the bottom element lies in $L'$, any such $Y$ needs to contain some element $p$. If $X_p\subseteq Y$ for every $p\in Y$, then this would imply that $Y=\bigcup X_p$, contradicting the assumption that $Y\notin L'$. Thus there is some $p\in Y$ such that $X_p\not \subseteq Y$. In particular there needs to be some $q\in X_p$ such that $q\notin Y$, which witnesses that $f(Y)>0$.
\end{proof}

Combining \cref{thm:lattice_function_dense} and \cref{lem:lattice_expand} results in a proof of \cref{thm:sublattice_function}:
\begin{proof}[Proof of \cref{thm:sublattice_function}]
 Let $\bot$ be the bottom element of $L'$ and let $\top$ be the top element of $L'$. By \cref{thm:lattice_function_dense} there is a submodular function $f$ on $L' = [\bot,\top] \subseteq L$ and a $k\in \R$ such that $f^{-1}([0,k))=L'$. Using this $f$ as input in \cref{lem:lattice_expand} results in the desired submodular function on $L$.
\end{proof}

From \cref{lem:lattice_expand} and \cref{thm:sublattice_function} we now immediately obtain the same results for subuniverses, in the way discussed above:
\begin{THM}\label{thm:subuniverse_function}
 Given a distributive universe $\vU$ of separations and a subuniverse $\vU'\subseteq \vU$, there is a submodular order function $f\colon\vU\to \R^+_0$ and a $k\in \R^+_0$ such that $\vU'=f^{-1}([0,k])$.
\end{THM}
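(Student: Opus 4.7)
The plan is to apply \cref{thm:sublattice_function} directly to the lattice $\vU$ and its sublattice $\vU'$ (every subuniverse of $\vU$ is in particular a sublattice of $\vU$, since it is closed under joins and meets), obtaining a submodular function $f'\colon\vU\to\R^+_0$ and some $k\in\R^+_0$ with $\vU'=(f')^{-1}([0,k])$. The only catch is that this $f'$ need not be symmetric under the involution, so it is not yet an order function. I would fix this with the symmetrisation trick already indicated before \cref{lem:lattice_expand}, namely by setting
\[
    \bar f(\vs) \coloneqq \tfrac{1}{2}\bigl(f'(\vs)+f'(\sv)\bigr).
\]

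The next step is to verify that $\bar f$ has all the required properties. Symmetry is immediate from the definition. For submodularity I would argue that $\bar f$ is the average of $f'$ and the map $\vs\mapsto f'(\sv)$, so it suffices to check that this second map is submodular on $\vU$. This follows from DeMorgan's laws: for $\vr,\vs\in\vU$,
\[
    f'\bigl((\vr\join\vs)^*\bigr)+f'\bigl((\vr\meet\vs)^*\bigr)=f'(\rv\meet\sv)+f'(\rv\join\sv)\le f'(\rv)+f'(\sv),
\]
using the submodularity of $f'$ on the lattice $\vU$.

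Finally I would check that $\vU'=\bar f^{-1}([0,k])$. Since $\vU'$ is a subsystem of $\vU$, it is closed under the involution, so $\vs\in\vU'$ if and only if $\sv\in\vU'$. If $\vs\in\vU'$, then both $f'(\vs)\le k$ and $f'(\sv)\le k$, so $\bar f(\vs)\le k$. Conversely, if $\vs\notin\vU'$, then neither $\vs$ nor $\sv$ lies in $\vU'$, so both $f'(\vs)>k$ and $f'(\sv)>k$, giving $\bar f(\vs)>k$.

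There is no real obstacle here; the entire argument is a direct assembly of \cref{thm:sublattice_function} with the symmetrisation procedure, and the only point worth spelling out carefully is the use of DeMorgan to transfer submodularity of $f'$ to the composition $f'\circ{}^*$.
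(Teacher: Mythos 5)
Your proposal is correct and follows essentially the same route as the paper: apply \cref{thm:sublattice_function} to the sublattice $\vU'$ and then symmetrise (the paper uses the sum $f'(\vs)+f'(\sv)$ with threshold $2k'$ rather than the average, which is an immaterial difference). Your explicit verification of the submodularity of $f'\circ{}^*$ via DeMorgan and of the level set using closure of $\vU'$ under the involution just spells out what the paper leaves implicit.
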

\begin{proof}
    We apply \cref{thm:sublattice_function} to $\vU'$ as a sublattice of $\vU$ to obtain a submodular function $f'$ and $k' \in \mathbb{R}_0^+$ with $\vU' = f'^{-1}([0,k'])$.
    We now define a symmetric order function $f$ on $\vU$ with $f(\vs) \coloneqq f'(\vs) + f'(\sv)$. With $k \coloneqq 2k'$ we have $\vU' = f^{-1}([0,k])$, as desired.
\end{proof}

\begin{THM}\label{lem:uinterval}
 Let $\vU$ be a universe of separations and $\vU' = [\vx,\xv]\subseteq \vU$ a symmetric interval in $\vU$.
 Suppose that $f\colon \vU'\to \R^+_0$ is a submodular order function on $\vU'$ with maximum value $k$.
 Then there exists a submodular order function $g\colon \vU\to \R^+_0$ such that $g(z)=f(z)$ for all $z\in \vU'$ and $g(z)>k$ for all $z\notin \vU'$.
\end{THM}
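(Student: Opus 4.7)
The plan is to obtain $g$ by applying \cref{lem:lattice_expand} to $\vU'$ as an interval in the lattice $\vU$ (forgetting, momentarily, the involution) and then symmetrising the resulting function to turn it into an order function, just as was done when passing from \cref{thm:sublattice_function} to \cref{thm:subuniverse_function}.

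More precisely, first I would observe that since $\vU' = [\vx,\xv]$ is symmetric -- an immediate consequence of $^*$ being order-reversing and ${\vx}^{**} = \vx$ -- for every $\vs \in \vU$ we have $\vs \in \vU'$ if and only if $\sv \in \vU'$. Then I would invoke \cref{lem:lattice_expand} on $\vU'$ viewed merely as an interval in the lattice $\vU$ to obtain a submodular function $g'\colon \vU \to \R^+_0$ with $g'(\vs) = f(\vs)$ for $\vs \in \vU'$ and $g'(\vs) > k$ for $\vs \notin \vU'$. Finally, I would define
\[
    g(\vs) \coloneqq \frac{g'(\vs) + g'(\sv)}{2}
\]
for every $\vs \in \vU$.

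The rest is verification. Symmetry of $g$ is immediate from its definition, so $g$ is an order function. Submodularity of $g$ follows from DeMorgan's laws and the submodularity of $g'$: for $\vr,\vs \in \vU$,
\[
    2\bigl(g(\vr \join \vs) + g(\vr \meet \vs)\bigr) = g'(\vr \join \vs) + g'(\rv \meet \sv) + g'(\vr \meet \vs) + g'(\rv \join \sv),
\]
and applying submodularity of $g'$ to each of the pairs $\{\vr,\vs\}$ and $\{\rv,\sv\}$ bounds the right-hand side by $g'(\vr) + g'(\vs) + g'(\rv) + g'(\sv) = 2(g(\vr) + g(\vs))$. For $\vs \in \vU'$, symmetry of $\vU'$ gives $\sv \in \vU'$ too, and since $f$ is itself an order function, $g(\vs) = \tfrac{1}{2}(f(\vs) + f(\sv)) = f(\vs)$. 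For $\vs \notin \vU'$ we also have $\sv \notin \vU'$, so both $g'(\vs) > k$ and $g'(\sv) > k$, whence $g(\vs) > k$.

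There is no real obstacle here; the only step that requires attention is checking that averaging $g'$ with its composition with ${}^*$ preserves submodularity, which is exactly where DeMorgan's laws in the universe $\vU$ are used. Everything else is immediate from the symmetry of the interval $[\vx,\xv]$ and from the corresponding properties of $g'$ supplied by \cref{lem:lattice_expand}.
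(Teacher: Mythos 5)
Your proposal is correct and follows essentially the same route as the paper: apply \cref{lem:lattice_expand} to the interval $[\vx,\xv]$ in the underlying lattice and then symmetrise the resulting function by averaging it with its composition with $^*$, using DeMorgan's laws for submodularity and the symmetry of $\vU'$ to see that the averaged function still agrees with $f$ on $\vU'$ and exceeds $k$ outside it. Your write-up is in fact slightly more explicit than the paper's about why the symmetrisation preserves submodularity and why both $g'(\vs)$ and $g'(\sv)$ exceed $k$ off $\vU'$.
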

\begin{proof}
    We apply \cref{lem:lattice_expand} to $\vU'$ as an interval in the lattice $\vU$, to obtain a submodular function $g'$ on $\vU$ which agrees with $f$ on $\vU'$. This function need not be symmetric, but we can define $g(\vz) \coloneqq \frac{g'(\vz) + g'(\zv)}{2}$. Since $f$ is symmetric and $g'$ agrees with $f$ on $\vU'$, also $g$ agrees with $f$ on $\vU$. Moreover $g$ is symmetric. Since $g'$ takes values larger than $k$ outside of $\vU'$, so does $g$.
\end{proof}

\section{Submodular decompositions in distributive universes} 
\label{sec:subdec}
In this concluding section we consider decompositions of separation systems which are submodular in some universe, asking how  such a separation system can be written as the union of proper subsystems which are still submodular.
On one hand, we show that each separation systems $\vS$ which is submodular in some distributive universe $\vU$ of separations can be decomposed (although not necessarily disjoint) into at most three strictly smaller, again submodular in $\vU$, separation systems.
On the other hand, we will be able to deduce that we can decompose every such separation system into disjoint submodular subsystems, each of which can be embedded into a universe of bipartitions, in which they are again submodular.

The former statement also allows us to lower bound the size of a largest proper submodular subsystem:
by the pigeon-hole principle, at least one of these subsystems will have a size of at least $\frac{\abs{S}}{3}$.
This observation vaguely links the question of submodular decompositions to the unravelling problem \cite{Unravel}:
suppose $\vS$ contains a separation $\vs$ such that $\vS' = \vS \sm \{\vs,\sv\}$ is still submodular -- this is the case if $\vS$ can be unravelled -- then we can decompose $\vS$ into the two submodular subsystems $\vS'$ and $\{\vs, \sv, {\vs\join\sv}, {\vs\meet\sv}\}$.

However, while this is a decomposition into fewer parts than the ones we will obtain from our theorems, our decompositions will have the advantage that their constituent subsystems are not merely submodular in $\vU$ but `spanned' in $\vS$:
Given a universe $\vU$ of separations and a subsystem $\vS\subseteq \vU$, we say that $\vS'\subseteq \vS$ is a \emph{corner-closed subsytem of $\vS$ (in $\vU$)} if, for all $\vs,\vr\in \vS'$ 
we have $\vs\join \vr\in \vS'$ whenever $\vs\join \vr\in \vS$. 
In particular, if $\vS$ is submodular in $\vU$, then any corner-closed subsystem $\vS'\subseteq \vS$ is submodular in $\vU$ as well.

We begin by considering the special case of systems of bipartitions. This will later become a subcase in the proof of our general decomposition theorem.
The idea applied in the general case will also be similar to the one in the bipartition case.
To be able to transfer these techniques we will apply the Birkhoff representation theorem to a universes of separations and investigate how the involution of the universes interacts with this representation.
We will state this in the form of an extended Birkhoff theorem for universes of separations.

\subsection{Decomposition in bipartition universes}
Given the universe $\vU$ of bipartitions of some set $V$ and a separation system $\vS\subseteq \vU$ which is submodular in $\vU$,
we consider, for some $v,w\in V$, the set \[ \{(A,B)\in \vS\mid \{v,w\}\subseteq A\text{ or } \{v,w\}\subseteq B\}.\]
This set forms a corner-closed subsystem of $\vS$ in $\vU$.
We can utilise this observation to find a decomposition of $\vS$ into three proper subsystems.
\begin{THM}\label{thm:decompose}
    Given a universe $\vU = \cB(V)$ of bipartitions and a separation system $\vS\subseteq \vU$, such that $|S|\ge 3$, there are corner-closed subsystems $\vS_1,\vS_2,\vS_3\subsetneq \vS$, such that $\vS_1\cup \vS_2\cup \vS_3=\vS$.
\end{THM}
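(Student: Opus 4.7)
The plan is to exploit a natural family of corner-closed subsystems of $\vS$ parametrised by unordered pairs $\{v,w\}$ of elements of $V$. For $v,w\in V$ set
\[
    T_{v,w}\coloneqq\{(A,B)\in\vS\mid\{v,w\}\subseteq A\text{ or }\{v,w\}\subseteq B\},
\]
i.e.\ the separations of $\vS$ that do not separate $v$ from $w$. First I would verify that $T_{v,w}$ is a subsystem (closed under the involution, since the defining condition is symmetric in $A$ and $B$) and that it is corner-closed in $\vU$: for $\vr=(A,B),\vs=(C,D)\in T_{v,w}$ with $\vr\join\vs=(A\cup C,B\cap D)\in\vS$, a short four-case distinction (on which side of each of $\vr,\vs$ contains $\{v,w\}$) shows that $\{v,w\}$ lies on one side of the join too; the analogous statement for the meet then follows by applying the involution, or symmetrically. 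Thus each $T_{v,w}$ is a corner-closed subsystem of $\vS$ in $\vU$.

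Next I would introduce the equivalence relation $\sim$ on $V$ given by $v\sim w$ if and only if no separation in $\vS$ separates $v$ from $w$. Reflexivity and symmetry are immediate, and transitivity follows because each bipartition has only two sides, so if neither $\vs\in\vS$ separates $v,w$ nor $w,x$, then $v,w,x$ lie on a common side of $\vs$. Observe that $T_{v,w}\subsetneq\vS$ is equivalent to $v\not\sim w$, so we must pick representatives from pairwise distinct $\sim$-classes.

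The counting step is to show that $\sim$ has at least three equivalence classes; this is where the hypothesis $|S|\geq 3$ enters. Every $(A,B)\in\vS$ must have each $\sim$-class entirely on one side of it, so the bipartitions in $\vS$ are in bijection with bipartitions of the quotient set $V/{\sim}$. If $V/{\sim}$ had at most two elements, there would be at most two unoriented bipartitions available, contradicting $|S|\geq 3$. Hence there are at least three $\sim$-classes, from which I can pick representatives $u,v,w$; by the preceding paragraph the three subsystems $\vS_1\coloneqq T_{u,v}$, $\vS_2\coloneqq T_{u,w}$, $\vS_3\coloneqq T_{v,w}$ are all proper.

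Finally, the covering $\vS_1\cup\vS_2\cup\vS_3=\vS$ is a pigeonhole argument: for any $(A,B)\in\vS$, at least two of the three points $u,v,w$ lie on the same side of the bipartition, and hence $(A,B)$ belongs to the corresponding $T_{\cdot,\cdot}$. The substance of the proof lies in the equivalence-class counting; the one place to be careful is the corner-closure verification, which I would spell out as a case analysis to avoid any sign-flip errors between $A\cup C$ and $B\cap D$.
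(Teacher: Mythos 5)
Your proof is correct and follows essentially the same route as the paper's: both decompose $\vS$ into the three corner-closed subsystems consisting of the separations that do not separate one of three chosen points of $V$ from another, and conclude by pigeonhole. The only difference is how the three points are produced --- the paper extracts them explicitly from the intersections of the sides of two separations with all four sides non-empty, whereas you count the equivalence classes of the `never separated' relation; both arguments are sound.
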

\begin{proof}
As $|S|\ge 3$, there are two distinct separations $\{A,B\},\{C,D\}\in S$ such that $A,B,C,D\neq \emptyset$. Moreover, we may assume that, after possibly exchanging $C$ and $D$, we have neither $C \subseteq A$ nor $C \subseteq B$ and thus $A\cap C\neq \emptyset$ and $B\cap C\neq \emptyset$. Additionally, after possibly exchanging $A$ and $B$, we may assume $B\cap D\neq \emptyset$.

Now pick $x\in A\cap C, y\in B\cap C$ and $z\in B\cap D$. Let $\vS_1$ be the set of all separations in $\vS$ not separating $x$ from $y$, let $\vS_2$ be the set of all separations in $\vS$ not separating $x$ from $z$ and let $\vS_3$ consists of all separations not separating $y$ from $z$. By construction, theses sets form corner-closed subsystems: a corner of two separations not separating $x$ from $y$, say, does not separate these two points either.

Moreover, $(A,B)$ is in neither $\vS_1$ nor $\vS_2$ and $(C,D)$ neither in $\vS_2$ nor $\vS_3$, thus $\vS_i \subsetneq \vS$ for all $1\le i\le 3$.

Finally, observe that, given any $(E,F)\in \vS$, either $E$ or $F$ contains two of the points $x,y,z$, so $(E,F)\in \vS_1\cup \vS_2\cup \vS_3$. Thus $\vS_1\cup \vS_2\cup \vS_3=\vS$, as claimed.
\end{proof}

\subsection{Birkhoff's theorem for distributive universes and decompositions in distributive universes}
To lift \cref{thm:decompose} to general distributive universes of separations, we will represent separations as subsets of some ground set.
For this we will once more, as in \cref{sec:extend}, use the Birkhoff representation theorem for distributive lattices:
\birkhoff*

If, in this theorem, the provided distributive lattice $L$ is actually a universe of separations, we obtain an order-reversing involution on $\cO(\cJ(L))$ by concatenating $\eta$ with the involution on the universe.
For our version of the Birkhoff theorem in distributive universes, we examine how this involution behaves with respect to $\cJ(L)$.

\begin{THM}[Birkhoff representation of universes of separations] \label{thm:birkhoff_uni}\text{}\\
    For every involution poset\footnotemark{} $(P, \leq, {}')$, the lattice $\cO(P)$ becomes a distributive universe of separations $(\cO(P), {}^\ast)$ when equipped with the involution ${}^\ast\colon X \mapsto P \sm X'$, where $X' = \menge{x' \mid x\in X}$.

    Let $U$ be a finite distributive universe of separations and let $P = \cJ(\vU)$.
    Then there exists an order-reversing involution~${}^\prime$ on $P$, such that the map $\eta\colon \vU \to \cO(P)$ defined by $
        \eta(a) = \menge{ x \in P \mid x \leq a } = \dcl{a}_{P}
    $ is an isomorphism of universes of separations between $\vU$ and $(\cO(P), {}^*)$.
\end{THM}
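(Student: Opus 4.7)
The plan is to prove the two parts of the theorem separately. For the first part, given an involution poset $(P,\le,{}')$, we must verify that $X^\ast \coloneqq P\sm X'$ defines an order-reversing involution on the distributive lattice $\cO(P)$. The only nontrivial check is that $X^\ast$ is again a downset whenever $X$ is: given $y\in X^\ast$ and $z\le y$, a hypothetical $z\in X'$ would write $z=x'$ for some $x\in X$, whence $y\ge x'$ forces $y'\le x''=x$, so by downward closure of $X$ we would get $y'\in X$ and hence $y=y''\in X'$, contradicting $y\in X^\ast$. That ${}^\ast$ is order-reversing is immediate, and $(X^\ast)^\ast = P\sm(P\sm X')' = P\sm(P\sm X'') = P\sm(P\sm X) = X$ uses only that ${}'$ is a bijection on $P$ and an involution.

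For the second part, the strategy is to read the involution on $\vU$ back through Birkhoff's representation. In $\cO(P)$ the meet-irreducibles are exactly the sets $P\sm\ucl{y}$ for $y\in P$, so $\eta$ induces a canonical bijection $y\mapsto m_y\coloneqq\eta^{-1}(P\sm\ucl{y})$ from $P$ onto the set of meet-irreducibles of $\vU$. Since ${}^\ast$ is an order-reversing involution of $\vU$ it swaps $\cJ(\vU)=P$ with the meet-irreducibles of $\vU$, so we can define
\[ x' \coloneqq m_x^\ast \qquad (x\in P). \]
Then $x'\in P$ because $m_x$ is meet-irreducible and ${}^\ast$ sends meet-irreducibles to join-irreducibles; order-reversing of ${}'$ follows at once from the fact that $x\mapsto m_x$ preserves order and $m\mapsto m^\ast$ reverses it.

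The crux is showing that ${}'$ is an involution and that $\eta$ intertwines the two involutions. For the former, the key observation is the intrinsic description of $m_z$ as the maximum element of $\vU$ not lying above $z$: a downset $\eta(\vu)$ fails to contain $z$ iff $\eta(\vu)\subseteq P\sm\ucl{z}=\eta(m_z)$, i.e.\ iff $\vu\le m_z$. Applying ${}^\ast$ to this characterisation, the set $\{\vu\mid m_x^\ast\not\le \vu\}$ equals $\{\vu\mid\vu\le x^\ast\}$, whose maximum is $x^\ast$. Hence $m_{m_x^\ast}=x^\ast$, and so $x'' = m_{m_x^\ast}^\ast = x$. For the intertwining we need $\eta(\vu^\ast)=P\sm\eta(\vu)'$; the derived identity $\eta(y^\ast)=P\sm\ucl{y'}$ (obtained from $x'=m_x^\ast$ by substituting $x=y'$, which is legitimate once ${}''=\mathrm{id}$ is available) together with the downset property of $\eta(\vu)$ reduces the left-hand side to $\{y\in P\mid y'\not\le\vu\}$, and the involution on $P$ shows the right-hand side equals the same set.

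The main obstacle is bootstrapping ${}''=\mathrm{id}$: starting from a characterising identity like $\eta(x^\ast)=P\sm\ucl{x'}$ directly leads to circular computations, since unfolding the nested primes relies on the very involution property one is trying to prove. Routing the definition through the Birkhoff bijection $m$, as $x'=m_x^\ast$, sidesteps this by reducing the involution identity to the single computation $m_{m_x^\ast}=x^\ast$, which is transparent once one has the above intrinsic description of $m_z$ inside $\vU$.
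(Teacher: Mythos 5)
Your proof is correct, but the heart of it -- the construction of the involution $'$ on $P=\cJ(\vU)$ -- takes a genuinely different route from the paper. The paper first proves a counting lemma, namely that $\abs{X^\ast}=\abs{P}-\abs{X}$ for every down-closed $X$ (via a minimal counterexample), uses this to define $x'$ as the unique element of $(\dcl{x}-x)^\ast\sm\dcl{x}^\ast$, and then establishes the involution property, the identity $X^\ast=P\sm\menge{x'\mid x\in X}$ (by induction on $\abs{X}$) and order-reversal by hand. You instead invoke the classical dual of Birkhoff's theorem: the meet-irreducibles of $\cO(P)$ are exactly the sets $P\sm\ucl{y}$, the anti-automorphism ${}^\ast$ of $\vU$ swaps join- and meet-irreducibles, and composing the two bijections gives $x'\coloneqq m_x^\ast$. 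Your key computation $m_{m_x^\ast}=x^\ast$, via the intrinsic description of $m_z$ as the maximum element not above $z$, cleanly yields $x''=x$ and then the intertwining identity; the steps I checked (the downset property of $P\sm X'$ in the first part, the equivalence $m_x^\ast\not\le\vu\Leftrightarrow\vu\le x^\ast$, and the reduction of $\eta(\vu^\ast)$ to $\menge{y\mid y'\not\le\vu}$) all go through. What your approach buys is conceptual economy -- no cardinality bookkeeping and no induction over down-sets, at the price of quoting the standard facts about meet-irreducibles of $\cO(P)$ and the behaviour of irreducibles under anti-automorphisms; the paper's argument is longer but entirely self-contained. It is worth noting that the two definitions of $x'$ agree, since $(\dcl{x}-x)^\ast\sm\dcl{x}^\ast$ is precisely the singleton picked out by your $m_x^\ast$.
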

\footnotetext{Recall that involution posets are the same as separation systems. However, to emphasise that the involution on $\cJ(\vU)$ is different from the involution on $\vU$, despite $\cJ(\vU)$ being a subset of $\vU$, we prefer the term `involution poset' in this context.}

\begin{proof}[Proof of~\cref{thm:birkhoff_uni}]
The first statement is immediate.
 For the second part let us assume we are given a distributive universe $\vU$ of separations and need to construct an involution on $P \coloneqq \cJ(\vU)$ so that $\vU$ is isomorphic to $\cO(P)$.

\Cref{thm:birkhoff_book} tells us that the two are isomorphic as lattices, so it remains to take care of the involution.
Concatenating the isomorphism of lattices $\eta\colon \vU \to \cO(J(\vU))$ with the involution on $\vU$ gives us an involution $ {}^\ast $ on $ \cO(P) $ which is order-reversing.
Take note that $ {}^\ast $ maps down-closet subsets of $P$ to down-closed subsets of $P$; it is \emph{not\/} defined on the elements of $P$.

That ${}^\ast$ is order-reversing means that $ X\subsetneq Y $ if, and only if, $ X^\ast\supsetneq Y^\ast $ for all down-closed subsets $X,Y$ of~$ P $.
Our aim is to define an order-reversing involution $ ' $ on $ P $ so that for all $ X\in\cO(P) $ we have~$ X^\ast=P\sm\menge{x'\mid x\in X} $.
We begin with the following claim, which is also a necessary condition for this aim to be achievable:
\[\label{lem:inverse_size}
    \textit{For all $X\in \cO(P)$ we have that $|X^\ast|=|P|-|X|$.} \tag{\textdagger}
\]

\noindent
We prove \cref{lem:inverse_size} by contradiction.
So assume that $ X $ is an inclusion-wise minimal down-closed subset of $ P $ for which \cref{lem:inverse_size} does not hold.
(It clearly holds for the empty set.)
Take a maximal element $ x $ of $ X $ and consider the down-closed set~$ X-x $.
By choice of $ X $, we have $ \abs{(X-x)^\ast}=\abs{P}-\abs{(X-x)} $.
From $ X^\ast\subsetneq(X-x)^\ast $ it thus follows that~$ \abs{X^\ast}\le\abs{P}-\abs{X} $.

To see that this holds with equality, first observe that since there is no down-closed set $ Y $ with $ (X-x)\subsetneq Y\subsetneq X $ and neither is there a down-closed set $ Y^\ast $ with $ (X-x)^\ast\supsetneq Y^\ast\supsetneq X^\ast $.
However, if $ (X-x)^\ast\sm X^\ast $ had more than one element, then adding a minimal one among them to $ X^\ast $ would give such a set~$ Y^\ast $.
Hence $ X^\ast $ must be exactly one element smaller than $ (X-x)^\ast $, giving equality and contradicting the choice of $X$.
This proves \cref{lem:inverse_size}.
\medbreak

\noindent
Let us now define the involution $ ' $ on~$ P $. 
The following up- and down-closures are all to be taken in~$ P $.
For each $x \in P$ we define $x'$ to be the unique element of $ (\dcl{x} - x)^\ast\sm\dcl{x}^\ast $; this is well-defined by \cref{lem:inverse_size}. We will need to show that $'$ is an involution, that $'$ is order-reversing and that $X^* = P \sm \menge{x' \mid x \in X}$ for every down-closed set $X$.

We have $\dcl{x}^\ast\subseteq P\sm \ucl{x'}$, and hence $(P\sm \ucl{x'})^\ast\subseteq \dcl{x}$.
If we had proper inclusion, i.e.\ $(P\sm \ucl{x'})^\ast\subsetneq \dcl{x}$, then the down-closedness of $(P\sm \ucl{x'})^\ast$ would imply that $(P\sm \ucl{x'})^\ast\subseteq \dcl{x}-x$ and thus $(\dcl{x}-x)^\ast\subseteq P\sm \ucl{x'}$, contradicting the choice of $x'$.
Thus the inclusion holds with equality, and we have $\dcl{x}^\ast=P\sm \ucl{x'}$.

We are now going to show, given some down-closed set $ X $ in which $ x $ is maximal, that $ (X-x)^\ast\sm X^\ast = \menge{x'} $.
Since $\dcl{x}\subseteq X$, we have that $X^\ast\subseteq \dcl{x}^\ast$ and thus $X^\ast$ cannot contain $x'$.
But $(X-x)^\ast$ does contain $x'$, as otherwise, by $\dcl{x}^\ast=P\sm\ucl{x'}$, we have that $(X-x)^\ast\subseteq \dcl{x}^\ast$ and thus $(X-x)\supseteq \dcl{x}$, which is absurd.

This observation allows us to infer that $ ' $ is indeed an involution on $ P $: by the fact that $ \dcl{x}^\ast=(\dcl{x} - x)^\ast - x' $ is down-closed, we know that $ x' $ is maximal in $ (\dcl{x} - x)^\ast $ and $ x'' $ is the unique element of $ ((\dcl{x} - x)^\ast - x')^\ast\sm(\dcl{x} - x)^{\ast\ast}=\dcl{x}\sm(\dcl{x}-x) $, so $x''$ is  $x$.

Let us show that we have $ X^\ast=P\sm\menge{x'\mid x\in X} $ for all $ X\in\cO(P) $.
We do so by induction on the size of $ X $; for the empty set the statement is immediate.
So suppose that the assertion holds for each proper down-closed subset of some non-empty $ X \in \cO(P) $ and let $ x $ be a maximal element of $ X $.
Then $ (X-x)^\ast=P\sm\menge{y'\mid y\in (X-x)} $.
By the earlier observation, the single element in $ (X-x)^\ast\sm X^\ast $ is precisely $ x' $, giving $ X^\ast=P\sm\menge{y'\mid y\in X} $ as claimed.

Finally, we shall check that $ ' $ is order-reversing. For this let some $ x\in P $ be given.
Since $ \dcl{x}^\ast $ is a down-closed set which does not contain $ x' $ we have~$ \dcl{x}^\ast\sub P\sm\ucl{x'} $.
By applying $ {}^\ast $ to both sides and using the above paragraph we get that $ \dcl{x}\supseteq P\sm\menge{y'\mid y\in P\sm\ucl{x'}} $.
The right-hand side simplifies to $ \menge{y'\mid y\in\ucl{x'}} $.
Since this set is down-closed and contains $ x''=x $, the inclusion is in fact an equality, i.e.\ $ \dcl{x}=\menge{y\mid y'\in\ucl{x'}} $.
From this it follows that $ y\le x $ if and only if~$ y'\ge x' $.
\end{proof}

We are now ready to prove the central decomposition theorem, that every sufficiently large separation system which is submodular inside a distributive host universe of separations, can be either decomposed into three disjoint submodular subsystems, or is isomorphic to a subsystem of a universe of bipartitions while preserving existing corners (i.e.\ joins and meets).
Such an isomorphism $\iota\colon \vS \to \vS'$ between two subsystems $\vS \subseteq \vU$ and $\vS' \subseteq \vU'$ of universes $\vU$ and $\vU'$, where $\iota(\vr) \join \iota(\vs) = \iota(\vr \join \vs)$  whenever $\vr \join \vs \in \vS$, and conversely $\iota(\vr) \meet \iota(\vs) = \iota(\vr \meet \vs)$  whenever $\vr \meet \vs \in \vS$, for all $\vr,\vs\in\vS$, is called a \emph{corner-faithful embedding}.

\begin{THM}\label{thm:decompose_toostrong}
    Let $\vU$ be a distributive universe of separations and let $\vS\subseteq \vU$, $\abs{S} \geq 3$, be a separation system which is submodular in~$\vU$.
    Then there are corner-closed subsystems $\vS_1, \vS_2, \vS_3 \subsetneq \vS$ which are submodular in~$\vU$ and such that $\vS_1 \cup \vS_2 \cup \vS_3 = \vS$.

    Moreover $\vS_1, \vS_2, \vS_3$ can be chosen disjointly unless $\vS$ can be corner-faithfully embedded into a universe of bipartitions.
\end{THM}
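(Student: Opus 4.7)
My plan is to reduce to a concrete distributive universe via the extended Birkhoff representation theorem (\cref{thm:birkhoff_uni}), replacing $\vU$ by $\cO(P)$ for $P = \cJ(\vU)$ equipped with an order-reversing involution ${}'$, so that the involution of $\vU$ becomes $X \mapsto P \sm X'$ and each $\vs \in \vS$ is identified with the down-closed subset $\eta(\vs) \sub P$. In this picture, joins and meets in $\vU$ become unions and intersections in $P$.

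For each pair $\{x, y\} \sub P$ that is ${}'$-invariant, i.e.\ $\{x', y'\} = \{x, y\}$ (meaning that either both $x, y$ are fixed by ${}'$, or $x' = y$), I would define $\vS_{\{x,y\}}$ to be the set of all $\vs \in \vS$ with $\{x, y\} \sub \eta(\vs)$ or $\{x, y\} \cap \eta(\vs) = \emptyset$. A short verification---using the ${}'$-invariance of $\{x, y\}$ for closure under $X \mapsto P \sm X'$, and the distributivity of $\cO(P)$ for closure under joins and meets---shows that $\vS_{\{x,y\}}$ is a corner-closed subsystem of $\vS$ that is submodular in $\vU$.

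Imitating the proof of \cref{thm:decompose}, I would then pick two distinct non-trivial separations $\vs, \vt \in \vS$ (which exist since $\abs{S} \ge 3$) and choose three markers $x_1, x_2, x_3 \in P$ from the four `regions' cut out by $\eta(\vs)$ and $\eta(\vt)$, arranging that each pair $\{x_j, x_k\}$ is ${}'$-invariant and each of the three subsystems $\vS_i \coloneqq \vS_{\{x_j, x_k\}}$ (for $\{i, j, k\} = \{1,2,3\}$) is properly contained in $\vS$, witnessed by $\vs$ or $\vt$. The pigeonhole principle applied to the three markers then yields $\vS_1 \cup \vS_2 \cup \vS_3 = \vS$, giving the first part of the theorem.

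The harder step is the disjointness addendum: disjointness fails exactly when some $\vs \in \vS$ has all three markers in $\eta(\vs)$ or none of them in $\eta(\vs)$. My plan is to observe that $\vS$ corner-faithfully embeds into a bipartition universe if and only if $\vs \meet \sv$ takes a single common value $\vs_0 \in \vU$ across all $\vs \in \vS$. Indeed, if such a common value exists, writing $Q \coloneqq \eta(\vs_0) = \eta(\vs) \sm \eta(\vs)'$ and decomposing $\eta(\vs) = Q \sqcup Y_\vs$ with $Y_\vs \sub P \sm (Q \cup Q')$ a ${}'$-invariant subset yields a corner-faithful embedding $\vs \mapsto Y_\vs$ into the bipartition universe on the ${}'$-orbits of $P \sm (Q \cup Q')$; conversely, any corner-faithful embedding $\iota$ into a bipartition universe forces $\iota(\vs \meet \sv) = \emptyset$, so injectivity of $\iota$ makes $\vs \meet \sv$ constant on $\vS$. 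Assuming $\vS$ is not embeddable, i.e.\ $\vs \meet \sv$ takes at least two values in $\vS$, I would exploit the resulting asymmetry in $P$ to refine the marker choice so that no $\vs \in \vS$ has all three markers on a single side, yielding the required disjoint decomposition. I expect the technical crux of the proof to lie in this final step---translating the inequality of two values of $\vs \meet \sv$ into a concrete marker choice avoiding every obstructing separation.
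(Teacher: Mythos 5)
Your reduction via \cref{thm:birkhoff_uni}, and your analysis of the bipartition-embeddable case (constancy of $\vs\meet\sv$ over $\vS$ -- note that $\vs\meet\sv$ does lie in $\vS$ by submodularity and involution-closure, which is what makes corner-faithfulness applicable to it), are sound and close to the paper's endgame. The gap is in the main, general case. Your subsystems $\vS_{\menge{x_j,x_k}}$ are closed under the involution only because each pair $\menge{x_j,x_k}$ is ${}'$-invariant, and three distinct markers all of whose three pairs are ${}'$-invariant must all be fixed points of ${}'$: if $x_1'\ne x_1$, then invariance of $\menge{x_1,x_2}$ and of $\menge{x_1,x_3}$ forces $x_1'=x_2$ and $x_1'=x_3$, so $x_2=x_3$. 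But the induced involution on $P=\cJ(\vU)$ can be fixed-point-free; for instance $P=\menge{a,a'}$ an antichain with $a'\ne a$ and $\vS=\vU=\cO(P)$ satisfies every hypothesis of the theorem, yet has no fixed points at all. So your construction cannot be started precisely outside the bipartition-like situation, which is where the theorem's real content (the disjointness clause) lives. Even where three suitable fixed points exist, your three subsystems overlap on every $\vs$ for which the markers are monochromatic (e.g.\ any $\vs$ with $\eta(\vs)\supseteq\menge{x_1,x_2,x_3}$), and the step ``refine the marker choice to avoid every obstructing separation'' -- which you yourself flag as the crux -- is exactly what is not supplied and does not obviously exist.

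The idea your plan is missing is to split along a \emph{single} join-irreducible rather than along pairs of markers: for $p\in P$ set $\vS_p=\menge{X\in\vS\mid p\in X,\ p'\notin X}$, $\vS_{p'}=\menge{X\in\vS\mid p\notin X,\ p'\in X}$ and $\vS_{p,p'}=\vS\sm(\vS_p\cup\vS_{p'})$. These are disjoint, involution-closed and corner-closed with no fixed-point assumption, so whenever this split is non-trivial for some $p$ the disjoint decomposition is immediate. If it is trivial for every $p$, then either $\vS=\vS_p$ for some $p$, in which case one deletes $\menge{p,p'}$ from $P$ and applies induction on $\abs{\vU}$ (a reduction step absent from your plan), or $\vS=\vS_{p,p'}$ for all $p$, i.e.\ every $\eta(\vs)$ is ${}'$-invariant; this is exactly the bipartition case, where your marker/pigeonhole argument (which is \cref{thm:decompose}) applies and disjointness is not claimed. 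I would recommend restructuring your proof around this trichotomy; your characterisation of embeddability via constancy of $\vs\meet\sv$ then falls out of the same analysis rather than having to drive the marker selection.
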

\begin{proof}
    The proof is by induction on $\abs{\vU}$.

    By applying \cref{thm:birkhoff_uni} we may assume, without loss of generality, that $\vU = (\cO(P), {}^*)$ for some involution poset $(P, \leq, {}')$.
    For every $p \in P$ consider the sets
    \begin{align*}
        \vS_p &\coloneqq \menge{X \in \vS \mid p \in X,\, p' \notin X}, \\
        \vS_{p'} &\coloneqq \menge{X \in \vS \mid p \notin X,\, p' \in X}, \\
        \vS_{p,p'} &\coloneqq \vS \sm (\vS_p \cup \vS_{p'}).
    \end{align*}
    Note that these are pairwise disjoint, closed under involution, corner-closed and $\vS = \vS_{p,p'} \cup \vS_p \cup \vS_{p'}$.
    If for any $p$ these three sets form a non-trivial decomposition, we are done.
    Otherwise either for every $p \in P$ we have $\vS = \vS_{p,p'}$ or for some $p$ we have $\vS = \vS_p$.

    If for some $p$ we have $\vS = \vS_p$, then we can consider $\vS$ as a subsystem of $\vU' \coloneqq \cO(P \sm \menge{p, p'})$ under the corner-faithful embedding $\iota\colon \vS_p \to \vU',\, X \mapsto X - p$.
    Since $\abs{\vU'} < \abs{\vU}$ we can then apply the induction hypothesis to get the desired decomposition.

    If $\vS = \vS_{p,p'}$ for every $p \in P$, then this means, that for every $p$ we have $p \in X \Leftrightarrow p' \in X$ for all $X \in \vS$. In particular, for every $X$, we have $X^* = X \sm A' = X \sm A$.
    This means that $\vS$ is a submodular subsystem of the bipartition universe $\mathcal{B}(P)$, and \cref{thm:decompose} gives the desired decomposition.
\end{proof}

Observe that in $(\cO(P), {}^\ast)$ we have $ X \meet X^* = \menge{ p \in P \mid p \in X, p' \notin X} $.
Hence, by recursively applying the decomposition into $\vS_p, \vS_{p'}$ and $\vS_{p,p'}$ as above we never separate any $X$ and $Y$ where $X \meet X^* = Y \meet Y^*$.

Conversely, given any $X \in \cO(P)$, the set of all $Y \in \vS$ with $Y \meet Y^* = X \meet X^*$ is a corner-closed subsystem of $\vS$. By the last argument of the proof above, these can be considered as subsystems of  bipartition universes. We thus obtain our second decomposition result, while also explicitly specifying the subsystems that make up our decomposition:

\decomposeinbipartitions*

\bibliography{JMC}

\vspace{1cm}
\noindent
\begin{minipage}{\linewidth}
 \raggedright\small
   \textbf{Christian Elbracht},
   \texttt{christian.elbracht@uni-hamburg.de}

   \textbf{Jakob Kneip},
   \texttt{jakob.kneip@uni-hamburg.de}
   
   \textbf{Maximilian Teegen},
   \texttt{maximilian.teegen@uni-hamburg.de}

   Universität Hamburg,
   Hamburg, Germany
\end{minipage}
\end{document}